\newtheorem{theorem}{Theorem}
\newtheorem{lemma}[theorem]{Lemma}
\newtheorem{remark}{Remark}
\newcommand{\bfx}{\boldsymbol{x}}
\newcommand{\mO}{\mathcal O}
\newcommand{\mI}{\mathcal{I}_h}
\newcommand{\mT}{\mathcal{T}^h}
\journal{AABB}
\begin{document}

\begin{frontmatter}

\title{HoSGFEM: High-order stable generalized finite element method for elliptic interface problem}

\author[1]{Bingying Zhao}
\author[1]{Yin Song}
\author[2,3]{Quanling Deng}
\author[1]{Xin Li\corref{note}}

\cortext[note]{Corresponding author}

\ead{lixustc@ustc.edu.cn}

\address[1]{School of Mathematical Sciences, University of Science and Technology of China, Hefei, China}
\address[2]{Yau Mathematical Sciences Center, Tsinghua University, Beijing 100084, China}
\address[3]{School of Computing, Australian National University, Canberra, ACT 2601, Australia}

\begin{abstract}
The Generalized Finite Element Method (GFEM) is an effective unfitted numerical method for handling interface problems. By augmenting the standard FEM space with an appropriate enrichment space, GFEM can accurately capture $C^0$ solutions across the interfaces. While
numerous GFEMs for interface problems have been studied, establishing a stable high-order GFEM with optimal convergence rates and robust system conditioning remains a challenge. 
The highest known order of two was established by Zhang and Babuška (SGFEM2, Comput. Methods Appl. Mech. Engrg. 363 (2020), 112889).
In this paper, we propose a unified enrichment space construction and establish arbitrary high-order stable GFEMs (HoSGFEM) for elliptic interface problems. 
The main idea distinguishes itself from Zhang and Babuška's SGFEM2 substantially
and it is twofold:  a) we construct dimensionality-reduced auxiliary locally supported piecewise polynomials that satisfy the partition of unity property for elements containing interfaces; 
b) we construct the enrichment scheme based on $d\{1,(x-x_c^e), \cdots, (y-y_c^e)^{p-1} \}$ ($d$ is the distance function; $(x_c^e, y_c^e)$ is the center of the element containing interface, thus element-based) for arbitrary $p$-th order elements instead of $d, d\{1,x,y\}$ or $d\{1,x,y,x^2,xy,y^2\}$ (global functions) for $p=1,2$ in the literature. 
This idea results in an enrichment space that has a large angle with the standard FEM space, leading to the stability of the method with system condition number growing in order $\mO(h^{-2})$.
We establish optimal convergence rates for HoSGFEM solutions under the proposed construction. Various numerical experiments with both straight and curved interfaces demonstrate the optimal convergence, FEM-comparable system condition number with $\mO(h^{-2})$ growth,  and robustness as element boundaries approach interfaces.  
\end{abstract}

\begin{keyword}
Interface problem, SGFEM, enrichment functions, high-order, finite element
\end{keyword}

\end{frontmatter}

\section{Introduction}\label{sec:intr}
Interface problems are ubiquitous in science and engineering, and solving them poses a number of challenges, including those in composite material elasticity~\cite{reddy2003mechanics, boresi2010elasticity}, cell problem for complex microstructures~\cite{moes2003computational, legrain2012high} and fluid dynamics of two-phase flows~\cite{folch1999phase, sauerland2011extended}. Because of different material properties or governing parameters, i.e., the discrepancy in physical coefficients across the governing equations on different subdomains, the solution typically loses smoothness at the interface. The standard finite element method (FEM)  requires sufficiently refined meshes to achieve the target accuracy or a fitted mesh such that its element boundaries are aligned with the interface when addressing such problems. However, when dealing with complex geometries and evolving interfaces, generating a fitted mesh becomes challenging and time-consuming. Therefore, unfitted mesh methods have garnered substantial research interest over the last several decades, as they use fixed meshes that are independent of interface position, thereby significantly reducing the complexity of mesh generation.

The unfitted mesh method mainly includes two approaches: the penalty method and the enrichment space method. The penalty method in general splits each basis function across the interface into two basis functions and adds to the standard bilinear form a penalization bilinear term acting on these splitted basis functions~\cite{hansbo2002unfitted}. The primary challenge in penalty method implementation is the small cut cell problem~\cite{strouboulis2000design, huang2017unfitted, chen2021adaptive, chen2023arbitrarily, chen2024arbitrarily, johansson2013high}. The other unfitted approach uses enrichment space to approximate the solution with reduced continuity at the interfaces. This includes series of generalized finite element methods (GFEMs)~\cite{kergrene2016stable,babuvska2017strongly, zhang2019strongly, zhang2020stable,deng2020higher},
cutFEMs~\cite{burman2015cutfem,burman2022cutfem,burman2025cut,burman2018shape}, immersed FEM~\cite{li1998immersed,zhang2004immersed,liu2007mathematical}, 
 and generalized isogeometric analysis (GIGA) ~\cite{jia2015extended, tan2015nurbs, zhang2022generalized, hu2024higher,song2025stable}. 
%

We focus on GFEMs that holds the partition of unity (PU) property~\cite{melenk1996partition, babuvska1997partition, duarte1996hp}. As an extension of the standard FEM, GFEM enhances approximation accuracy by incorporating local enrichment functions within the finite element space to capture discontinuities or singularities. Therefore, the GFEM can effectively handle complex problems such as material interfaces and crack propagation without the need of generating fitted meshes. It is worth noting that the extended finite element method (XFEM)~\cite{moes1999finite, daux2000arbitrary} was developed independently from GFEM but shares significant similarities with it. Both methods are based on the PU framework and incorporate local enrichment functions within the solution space. We use GFEM to refer to both methods in this paper.

Although numerous theoretical and numerical studies confirm that GFEM delivers higher approximation accuracy than standard FEM for problems with non-smooth solutions, the theoretical and practical development of a GFEM with numerical stability and robustness remains challenging. 
For instance, a series of work~\cite{zhang2020stable, nicaise2011optimal, babuvska2012stable, zhang2022condensed} have investigated the solution convergence property of GFEMs. However, many of these enrichment schemes result in a much higher condition number for the stiffness matrix. This increase in condition number amplifies the impact of rounding errors when solving the linear system, leading to possible loss of numerical stability. Ensuring the numerical stability of GFEM is crucial. A desirable property of a stable formulation is that the condition number of its stiffness matrix grows at a comparable rate as in the standard FEM, i.e., $\mO(h^{-2})$.
In summary,
the desired properties of GFEMs are threefold:  
\begin{itemize}
\item (a) \textit{convergence}--it achieves optimal convergence rates; 
\item (b) \textit{numerical stability}--its stiffness-matrix condition number grows at the FEM-comparable rate of $\mO(h^{-2})$; and
\item (c) \textit{robustness}--this conditioning does not worsen as element boundaries approach interfaces.  
\end{itemize}

Achieving all three properties requires careful construction of enrichment functions to prevent ill-conditioning and ensure numerical stability.
A GFEM that meets these criteria is termed a stable GFEM (SGFEM).
 To this end, various strategies have been proposed for numerical stability, such as matrix preconditioning~\cite{schweitzer2011stable, menk2011robust}, orthogonalization of enrichment functions~\cite{agathos2019improving, agathos2019unified}, modification of PU functions~\cite{oh2008piecewise, hong2013mesh} and incorporation of interpolation operators within the enrichment construction~\cite{zhang2020stable, babuvska2012stable}.
To the best of our knowledge, linear SGFEM for multidimensional interface problems has been extensively developed, and its rigorous theoretical foundations, including proofs of convergence and stability, are now well established in the literature; see, for example, \cite{kergrene2016stable,babuvska2017strongly,cui2022stable}.  

In 1D, the linear SGFEM framework of \cite{kergrene2016stable,babuvska2017strongly} extends naturally to higher-order elements; see \cite{deng2020higher}. In 2D, the highest known convergence order for SGFEM, namely order two, was established by Zhang and Babuška in 2020 \cite{zhang2020stable}. The work developed a second-order GFEM that achieves optimal convergence for problems with numerical stability and robustness by employing cubic Hermite polynomials together with local principal component analysis (LPCA) to control the stiffness-matrix condition number. This second-order construction of the enrichment space differs substantially from the linear SGFEM formulation. In three or more dimensions, no high-order ($p\ge 2$) SGFEM for interface problems has been reported to date.

Generalizing SGFEM to higher orders ($p>2$) is considerably more intricate. Such methods continue to encounter the previously noted difficulties of ensuring the three conditions: optimal convergence, numerical stability, and robustness. Research on higher-order GFEM remains relatively limited, and fully satisfactory theoretical and numerical results have yet to be achieved. For example, several higher-order formulations were proposed in \cite{cheng2010higher} (in the XFEM setting), but their numerical experiments reported only near-optimal convergence rates.

In this paper, we focus on 2D interface problem and propose a uniform construction of the enrichment spaces for arbitrary order $(p \ge 1)$ including the linear case (hence, a new linear SGFEM as a byproduct), and we refer to our method as high-order stable GFEM (HoSGFEM). 
The main idea is twofold. 
Firstly, for the elements containing interfaces, we adopt the Bernstein polynomials with local supports and form a linear combination, denoted as $\phi$, of them as basis functions, where the combination coefficients are determined in a way that reduces the linear dependence of the functions in the enrichment space. 
This construction satisfies the PU property.
This combination reduces the dimensionality of the enrichment space and it is the key to controlling the conditioning of the resulting stiffness system. 
Secondly, instead of using global enrichment functions $d, d\{1,x,y\}$ or $d\{1,x,y,x^2,xy,y^2\}$ for $p=1,2$ in the literature, we construct element-centroid-based local enrichment functions using $d\{1,(x-x_c^e), \cdots, (y-y_c^e)^{p-1} \}$ for $p$-th order elements. 
Herein, $d$ is the distance function, i.e., the absolute value of the distance of a point $(x,y)$ in the domain to the closed point on the interface. $(x_c^e, y_c^e)$ is the centroid of the element containing part of the interface. 
Finally, the local enrichment basis function is of the form $\phi\big( d(x-x_c^e)^i(y-y_c^e)^j - \mI(d(x-x_c^e)^i(y-y_c^e)^j) \big), \ 0\le i, j <p$, where $\mI$ is the interpolation operator for the FEM space.
This construction of the element-centroid-based enrichment functions reduces the linear dependence of the basis functions in the enrichment space with the basis functions in the FEM space. 
As a result, this idea constructs an enrichment space that has a large angle with the standard FEM space, leading to the numerical stability with system condition number growth controlled in order $\mO(h^{-2})$.
We show that the method satisfies the three conditions (a)-(c), hence, indeed a SGFEM. 
Specifically, we establish optimal convergence rates for the proposed HoSGFEM. 
A set of numerical experiments, including test problems with both straight and curved interfaces, confirms optimal convergence, numerical stability with stiffness-matrix condition number that scales comparably to the standard FEM, and robustness as element boundaries approach the interfaces. 
This paper focuses on rectangular elements in two dimensions. Extension to triangular elements and to three-dimensional interface problems will be explored in future work.

The rest of this paper is organized as follows. Section \ref{section2} introduces the model problem. The GFEM and several constructions of enriched space are discussed in Section \ref{sec:sgfem}. In section \ref{sec:hosgfem}, we provide the detailed construction of a new enriched space, including PU functions and enrichment functions. Section \ref{sec:ea} provides the proof of the optimal convergence property. Section \ref{sec:num} presents the results of several numerical experiments, and the last section gives a summary of this work.

\section{Elliptic interface problems}\label{section2}
We focus on the two-dimensional elliptic interface problem. 
Let $\Omega \subset \mathbb{R}^2$ be a bounded, simply connected, and convex domain with Lipschitz boundary $\partial \Omega$. 
$\Gamma$ is a smooth interface which divides the $\Omega$ into two subdomains $\Omega_0$ and $\Omega_1$ such that $\bar{\Omega}=\bar{\Omega}_0 \cup \bar{\Omega}_1$, $\Omega_0 \cap \Omega_1=\emptyset$ and $\Gamma = \bar{\Omega}_0 \cap \bar{\Omega}_1$. Figure \ref{fig:Interface problem} shows a square domain with an open interface. The second-order elliptic problem with Neumann boundary conditions on $\Omega$ is: Find $u$ such that
\begin{equation}\label{eq:pde}
\begin{aligned}
    -\nabla  \cdot (\kappa \nabla u) & = f, \quad \text{in}\ \Omega,\\
    \frac{\partial (\kappa \nabla u)}{\partial \boldsymbol{n}_{b}} & = g, \quad \text{on} \ \partial\Omega,\\
    \llbracket u \rrbracket_{\Gamma} & = 0, \\
    \llbracket \kappa  \frac{\partial u}{\partial \boldsymbol{n}} \rrbracket_{\Gamma} & = q, \quad \text{on}\ \Gamma,
\end{aligned}
\end{equation}
where $f,g,q$ are given functions satisfying the compatibility condition:
$\int_\Omega f d \bfx + \int_{\partial \Omega} g ds + \int_{\Gamma} q ds = 0.$
Herein, $\kappa$ is a positive, piecewise function which is $\kappa_0$ in $\Omega_0$ and $\kappa_1$ in $\Omega_1$ with $\kappa_0 \ne \kappa_1$. 
$\boldsymbol{n}_{b}$ is the unit outward normal to the boundary $\partial \Omega$. 
Let $u_j, j=0,1,$ denote the restrictions of $u$ on $\Omega_j$.
$ \llbracket u \rrbracket_{\Gamma}$ denotes the jumps of function $u$ across the interface $\Gamma$. 
Specifically, the jumps $\llbracket u \rrbracket_{\Gamma}$ and $\llbracket \kappa \frac{\partial u}{\partial \boldsymbol{n}} \rrbracket_{\Gamma}$ are defined as
\begin{equation*}
    \begin{aligned}
        \llbracket u \rrbracket_{\Gamma} &:= u_0(\boldsymbol{x}) - u_1(\boldsymbol{x}),&& \boldsymbol{x} \in \Gamma\\
        \llbracket \kappa \frac{\partial u}{\partial \boldsymbol{n}} \rrbracket_{\Gamma} &:= \kappa_0 \frac{\partial u_0}{\partial \boldsymbol{n}_0} (\boldsymbol{x}) + \kappa_1 \frac{\partial u_1}{\partial \boldsymbol{n}_1}(\boldsymbol{x}), && \boldsymbol{x} \in \Gamma,
    \end{aligned}
\end{equation*}
where $\boldsymbol{n}_0$ and $\boldsymbol{n}_1$ are the unit outward normal to the interface $\Gamma$ as the intersection at the boundary of $\Omega_0$ and $\Omega_1$.
With the Neumann boundary conditions on $\Omega$, the solution to the interface problem \eqref{eq:pde} is unique up to a constant. 
To ensure uniqueness, we impose the normalization condition that the integral of the solution over the domain vanishes.

\begin{figure}[htbp]
    \centering
    \includegraphics[width=0.3\linewidth]{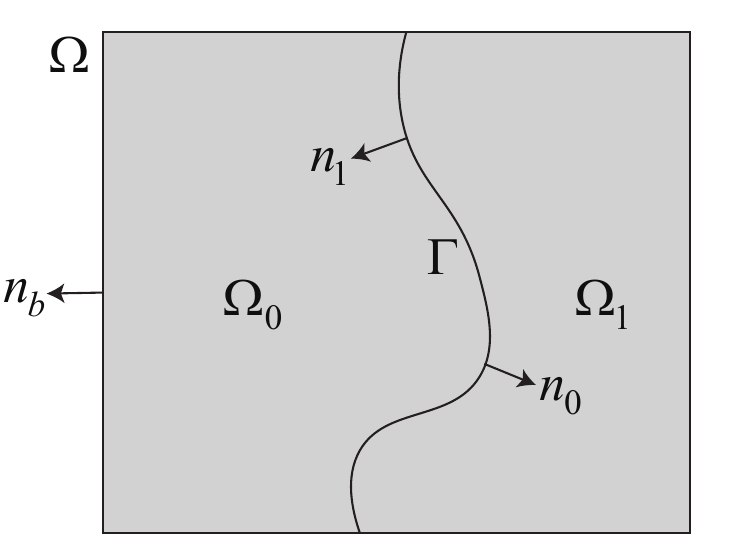}
    \caption{An example of a domain $\Omega$ with a curved interface $\Gamma$.}
    \label{fig:Interface problem}
\end{figure}

Let $V := \{ w\in H^1(\Omega):  \llbracket w \rrbracket_{\Gamma} = 0\}$ be the tailored Hilbert space for the interface problem~\eqref{eq:pde}. Define the bilinear form $a(\cdot, \cdot)$ on $V \times V$ and linear functional $l(\cdot)$ on $V$ as follows
\begin{equation}\label{eq:BiLinearForm}
    a(u, v) = \int_{\Omega} \kappa \nabla u \cdot \nabla v d \boldsymbol{x},\quad l(v) = \int_{\Omega} fv d \boldsymbol{x} + \int_{\partial \Omega} g v d s + \int_{\Gamma} q v ds.
\end{equation}
The variational problem of the equation~\eqref{eq:pde} is: Find 
$u \in V$ such that 
$$a(u, v) = l(v),\ \forall v \in V.$$
%
%
Let $\mT$ be a partition of $\Omega$ into non-overlapping mesh cells, namely quadrilaterals. 
Let $V^h \subset V$ be a finite-dimensional test and trial space.
The Galerkin formulation of \eqref{eq:pde} is to
find $u_h \in V^h$ such that
\begin{equation}\label{eq:GalApproProm}
    a(u_h, v) = l(v),\ \forall v \in V^h,
\end{equation}
where the discrete space $V^h$ will be specified below for each method within the Galerkin framework.

\section{HoSGFEM main idea}\label{sec:sgfem}

In this section, we first present the general framework of the GFEM, then review the related and widely-studied GFEMs, which are stable in lower-order cases, and finally introduce the proposed high-order stable formulation, referred to as HoSGFEM.

\subsection{GFEM framework}
Consider a quasi-uniform quadrilateral partition $\mathcal{T}^h$ of domain $\Omega$ with  elements $e_i$ such that $\mathcal{T}^h = \{e_i\}$.
In this paper, we refer to elements intersected by the interface as \emph{cut elements} or \emph{interface elements}. 
GFEM enriches the standard FEM approximation space by incorporating a local enrichment space. For an interface problem, the enrichment space typically has local support over the elements containing and surrounding the interfaces. Denote the standard FEM space
as $V^h_{FEM}$ and the enrichment space as $V^h_{ENR}$.
The enrichment space $V^h_{ENR}$ is spanned by a set of additional local functions that captures the feature of the solution at the interface $\Gamma$.
GFEM approximation space is defined as $V^h_{GFEM}$, where
\begin{equation}\label{eq:vhgfem}
    V^h_{GFEM} := V^h_{FEM} \oplus V^h_{ENR} = \left\{u = u_1 + u_2: u_1 \in V^h_{FEM}, u_2 \in V^h_{ENR} \right\}.
\end{equation}
The enrichment space admits a representation of the form
$
    V^h_{ENR} = \text{span}\{\psi_i, i \in \mathit{I}_{ENR}\},
$
where $\mathit{I}_{ENR}$ is the index set of enrichment nodes. A basis function $\psi_i$ in $V^h_{ENR}$ is typically written as a product of a function in $V^h_{FEM}$ and an enrichment function that captures the solution feature at the interface.
The number of the enrichment functions $n_{p}$ depends on the order $p$ and the mesh configuration. Following the aforementioned construction, the numerical approximation $u_h$ in the GFEM space takes the form
\begin{equation*}
    u_h = \sum_{i \in I_{FEM}}  \phi^p_i u^h_{1,i} + \sum_{i \in I_{ENR}}  \psi_{i} u^h_{2,i},
\end{equation*}
where 
$
    V^h_{FEM} = \text{span}\{\phi_i^p,i\in \mathit{I}_{FEM}\} 
$
and $\mathit{I}_{FEM}$ is the index set of the $p$-th order standard FEM space.
Similar to standard FEM, the corresponding coefficient vector $\mathbf{U} = [\mathbf{U}_1, \mathbf{U}_2]$ can be obtained by
solving the linear system
\begin{equation*}
	\mathbf{KU} :=
    \left[\begin{array}{ll}
    \mathbf{K}_{11} & \mathbf{K}_{12} \\
    \mathbf{K}_{12}^T & \mathbf{K}_{22}
    \end{array}\right]
    \left[\begin{array}{l}
    \mathbf{U}_1 \\
    \mathbf{U}_2
    \end{array}\right]=
    \left[\begin{array}{l}
    \mathbf{F}_1 \\
    \mathbf{F}_2
    \end{array}\right] =: \mathbf{F},
    \end{equation*}
where
\begin{equation*}
\begin{aligned}
& \left(\mathbf{K}_{11}\right)_{i,j}=a\left( \phi^p_j, \phi^p_i\right),\left(\mathbf{K}_{2 2}\right)_{i,j}=a\left( \psi_j,  \psi_i\right), \left(\mathbf{K}_{12}\right)_{i, j}= a\left(\psi_j,  \phi^p_i\right),\\
& \left(\mathbf{F}_1\right)_{i} = l\left( \phi^p_i \right),  \left(\mathbf{F}_2\right)_{j} = l\left(\psi_j\right).
\end{aligned}
\end{equation*}
Here bilinear form $a(\cdot, \cdot)$ and linear functional $l(\cdot)$ is defined in~\eqref{eq:BiLinearForm}. In practical implementation, the above linear system sometimes is preconditioned, leading to the following linear system
\begin{equation*}
 \widehat{\mathbf{K}} \widehat{\mathbf{U}}=\widehat{\mathbf{F}},\quad \widehat{\mathbf{K}}=\mathbf{D K D}, \quad \widehat{\mathbf{F}}=\mathbf{D F}, \quad \widehat{\mathbf{U}}=\mathbf{D}^{-1} \mathbf{U},
\end{equation*}
where $\mathbf{D}_{ii} = (\mathbf{K}_{ii})^{-1/2}$. The condition number of $\widehat{\mathbf{K}}$ is referred to scaled condition number (SCN) in the literature.

\subsection{Overview of GFEMs} \label{sec:m}
In this section, we provide an overview of the widely studied GFEMs for interface problems. These methods are well established and exhibit the desired properties (a), (b), and (c) outlined in Section \ref{sec:intr} for lower-order elements. Although they can be extended to higher-order elements in a straightforward manner, certain desired properties are no longer preserved. This limitation will be illustrated through numerical experiments, serving as a comparison with the proposed method.

We first introduce some notations. Let $I_{el}$ denote the index set of all elements and $I^0_{el}:= \{i: e_i \cap \Gamma \neq \emptyset, i \in I_{el} \}$ denote the interface elements. 
The index set of the union of interface elements and their adjacent elements is denoted as $I^1_{el}:=\{i: e_i \cap e_j \neq \emptyset, j \in I^0_{el} \}$. Moreover, we denote by $I_v$ and $I_n$ the index sets of mesh vertices $v_i$ and finite element nodes $\boldsymbol{x}_i$, respectively. Figure \ref{fig:CartesianMesh} uses different markers to illustrate this setting for the case of quadratic elements.
\begin{figure}[ht]
    \centering
    \includegraphics[width=0.35\linewidth]{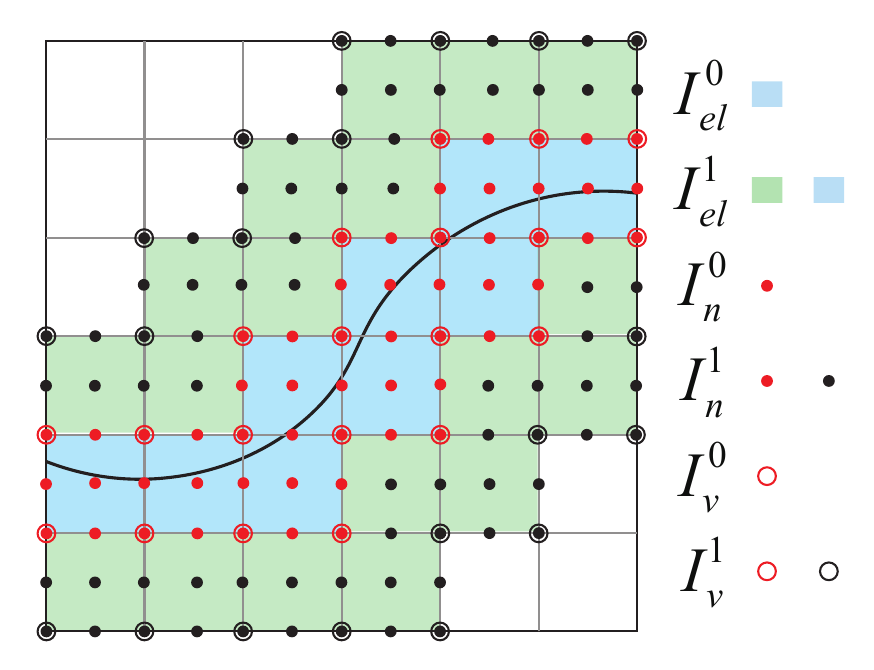}
    \caption{Illustration of different index sets.}
    \label{fig:CartesianMesh}
\end{figure}

Now, we review the existing enrichment space construction for GFEMs.

\textbf{GFEM:} A common construction of enrich space for interface problems is given by
\begin{equation}\label{eq:GFEM}
    V^h_{ENR} = \text{span} \left\{\phi^1_i(\boldsymbol{x}) d(\boldsymbol{x}), i \in \mathit{I}^0_{n} \right\},
\end{equation}
where $\phi^1_i(\boldsymbol{x})$ is the standard linear FE shape functions and $d(\boldsymbol{x}) = d_{\Gamma}(\boldsymbol{x}):=d(\boldsymbol{x}, \Gamma)$ is the
distance function associated with interface $\Gamma$. 
The set of enriched nodes $\mathit{I}^0_{n}$ are composed of nodes of all the interface elements. Figure \ref{fig:CartesianMesh} marks the enriched
nodes in set $\mathit{I}^0_{n}$ with red dots. 
The construction is simple and can easily extended to higher-order elements by replacing $\phi^1_i$ by $\phi^p_i$. 
However, the above construction for linear elements can only achieve the suboptimal convergence rate; we refer to \cite{kergrene2016stable}.

\textbf{Corrected GFEM:} Recent studies\cite{chessa2003construction, fries2008corrected} found that the suboptimal convergence of
construction~\eqref{eq:GFEM} is due to the lack of PU property on elements adjacent to interface elements. These elements are called
\emph{blending elements} where a portion of their nodes are used as enriched nodes. Corrected GFEM \cite{cheng2010higher, fries2008corrected}
addresses this issue by introducing a ramp function
\begin{equation*}
    R(\boldsymbol{x}) = \sum_{i \in I^0_{n}} \phi^1_i(\boldsymbol{x})
\end{equation*}
into the enrich space
\begin{equation}\label{eq:CorrGFEM}
    V^h_{ENR} =\text{span} \left\{\phi^1_i(\boldsymbol{x}) d(\boldsymbol{x}) R(\boldsymbol{x}), i \in \mathit{I}^1_{n} \right\},
\end{equation}
where $\mathit{I}^1_{n}$ are the nodes on interface elements and blending elements. Figure \ref{fig:CartesianMesh} illustrates all enriched nodes
in set $\mathit{I}^1_{n}$, marked jointly by red and black dots. The increased number of enriched nodes ensures complete partition of unity
functions on blending elements. Meanwhile, since the support of ramp functions $R(\boldsymbol{x})$ is confined to interface elements and
blending elements only, regions beyond these elements remain unaffected. However, even with these improvements, optimal convergence rates
still cannot be guaranteed.

\textbf{Stable GFEM:} Another issue for GFEM is the faster-growing scaled condition numbers than standard FEM. Standard FEM shows a condition growth of rate
$\mO(h^{-2})$ when performing $h$-refinement, while GFEM often yields considerably higher rate. The occurrence of this phenomenon can
be partially attributed to the excessively high linear correlation between the added enrichment basis functions and the finite element basis
functions, which makes the angle between the enriched space $V^h_{ENR}$ and finite element space $V^h_{FEM}$ too small. Stable GFEM try to address
this issue by incorporating the interpolation of the distance function, leading to the enriched space as 
\begin{equation}\label{eq:SGFEM}
    V^h_{ENR} =\text{span} \left\{\phi^1_i(\boldsymbol{x}) (d(\boldsymbol{x}) - \mathcal{I}_h (d)(\boldsymbol{x})), i \in \mathit{I}^{0}_{n} \right\},
\end{equation}
where $\mathcal{I}_h (w) = \sum_{i \in I_{n}} w(\boldsymbol{x}_i) \phi^1_i(\boldsymbol{x})$ is the interpolation operator. This technique has been adopted in several existing studies~\cite{zhang2020stable, babuvska2012stable, kergrene2016stable, sauerland2013stable, cui2022stable}. The enrich space achieves local approximation of $u-\mathcal{I}^h u$ and can be theoretically proven to obtain optimal convergence for linear elements. Additionally, the method is robust and condition numbers do not grow faster than standard FEM. This method satisfies the desired properties (a), (b), and (c) outlined in Section \ref{sec:intr}; thus, a SGFEM.
The generalization to higher-order elements has long been a challenging and unresolved problem.

\textbf{High-order GFEM:} To generalize linear GFEM to higher-order elements, the corresponding enrich space $V^h_{ENR}$ requires additional enrichment functions to achieve higher order local approximation. A common practice is to incorporate high-order polynomials into the enrichment functions. Take the standard GFEM~\eqref{eq:GFEM} for example, the set of enrichment functions at an enriched node expand from a singular $\{d(\boldsymbol{x})\}$ to multiple functions $\{d(\boldsymbol{x})x^iy^j, i+j < p\}$. When $p = 2$, $V_i = \{d(\boldsymbol{x}), d(\boldsymbol{x})x, d(\boldsymbol{x})y\}$. We abbreviate $\{d(\boldsymbol{x}), d(\boldsymbol{x})x, d(\boldsymbol{x})y \}$ as $d(\boldsymbol{x}) \{1, x, y\}$ to simplify the notation. The enrich space of GFEM of degree two is given by
\begin{equation}
    V^h_{ENR} =\text{span} \left\{\phi^2_i(\boldsymbol{x}) d(\boldsymbol{x})\{1, x, y\}, i \in \mathit{I}^{0}_{n} \right\}.
\end{equation}
Another option is to use $\phi^1_i$ as the PU function for high-order elements, where the enrichment space is defined as
\begin{equation}\label{eq:GFEM2}
    V^h_{ENR} =\text{span} \left\{\phi^1_i(\boldsymbol{x}) d(\boldsymbol{x})\{1, x, y\}, i \in \mathit{I}^{0}_{v} \right\}.
\end{equation}
This high-order extension approach can be applied to other GFEM, such as the aforementioned corrected GFEM~\eqref{eq:CorrGFEM} and SGFEM~\eqref{eq:SGFEM}.
However, these natural higher-order generalizations lack optimal convergence rates both in numerical experiments and error analysis.

\textbf{Quadratic corrected GFEM:}  Corrected GFEM with multiple enrichment terms is discussed in~\cite{cheng2010higher}. The enrich
space of degree two is
 \begin{equation}\label{eq:CGFEM2}
     V^h_{ENR} = \text{span} \left\{ \phi^1_i d(\boldsymbol{x})R(\boldsymbol{x})\{1, x, y, x^2, xy, y^2\}  ,\ i \in  \mathit{I}^{1}_{v} \right\}.
 \end{equation}
 Here, high-order polynomial terms are selected for improved accuracy. 
 However, this method still yields suboptimal results, and the issue of linear dependence between the two spaces remains a significant challenge.

\textbf{Quadratic SGFEM:} Stable GFEM of degree two based on \eqref{eq:SGFEM} was developed by Zhang and Babu\v{s}ka recently in~\cite{zhang2020stable}. The enriched space is
 \begin{equation}\label{eq:SGFEM2}
     V^h_{ENR} =\text{span} \left\{ H^1_i \left(\tilde{d}(\boldsymbol{x})\{1, x, y\} - \mathcal{I}_h (\tilde{d}\{1, x, y\})(\boldsymbol{x}) \right),\ i \in  \mathit{I}^{0}_{v} \right\},
 \end{equation}
 where $H^1_i$ are PU functions based on cubic Hermite interpolation polynomials and have the same DOF of $\phi^1_i$. $\tilde{d}$ is one-side distance function which equals $d$ on $\Omega_0$ and equals zero on $\Omega_1$. These modifications in the construction of the enrichment functions are all designed to reduce the linear dependence among the basis functions. The theoretical analysis of the optimal convergence for the \eqref{eq:SGFEM2} is also provided. To achieve a condition number growth rate comparable to standard FEM, local principal component analysis(LPCA)~\cite{abdi2010principal} method is used to preconditioning the stiffness matrix. We denote by $\mathbf{K}_{i}$ the $3 \times 3$ submatrix of the stiffness matrix corresponding to the enrichment functions $H^1_i \left(\tilde{d}\{1, x, y\} - \mathcal{I}_h \tilde{d}\{1, x, y\}) \right)$. Let its eigenvalues, sorted in descending order, be $\lambda_1 \geq \lambda_2 \geq \lambda_{3}$ and the corresponding percentages be given by $\xi_i :=\frac{\lambda_i}{\sum_{i=1}^{3} \lambda_i}$. Given a threshold $\xi$, principal components with values below this threshold will be removed. More details can be found in~\cite{zhang2020stable}. Numerical results demonstrate that the method achieves optimal convergence in the quadratic case. However, our numerical experiments will demonstrate that the straightforward extension of the method to high-order elements fails to produce optimal convergence results.

\subsection{HoSGFEM}\label{sec:hosgfem}
In this section, we present the main idea of high-order SGFEM for the interface problem \eqref{eq:pde}. As we have discussed in the last section, the key to the development of methods with desired properties (a), (b), and (c) as outlined in Section \ref{sec:intr}
is to define the enrichment space. 
%
The main challenge for the construction is how to define the space such that we can mathematically prove the optimal convergence rates, while reducing the linear dependence of the basis functions in both the spaces $V^h_{FEM}$ and $V^h_{ENR}$.
The linear independence restricts the resulting system from rapid growth in the condition number. With this in mind, we propose a two-step procedure to construct the enriched space, detailing the formulation of the basis functions $\psi_i$ and the corresponding enrichment space $V^h_{ENR}$.

\begin{figure}[ht]
    \centering
    \includegraphics[width=0.45\linewidth]{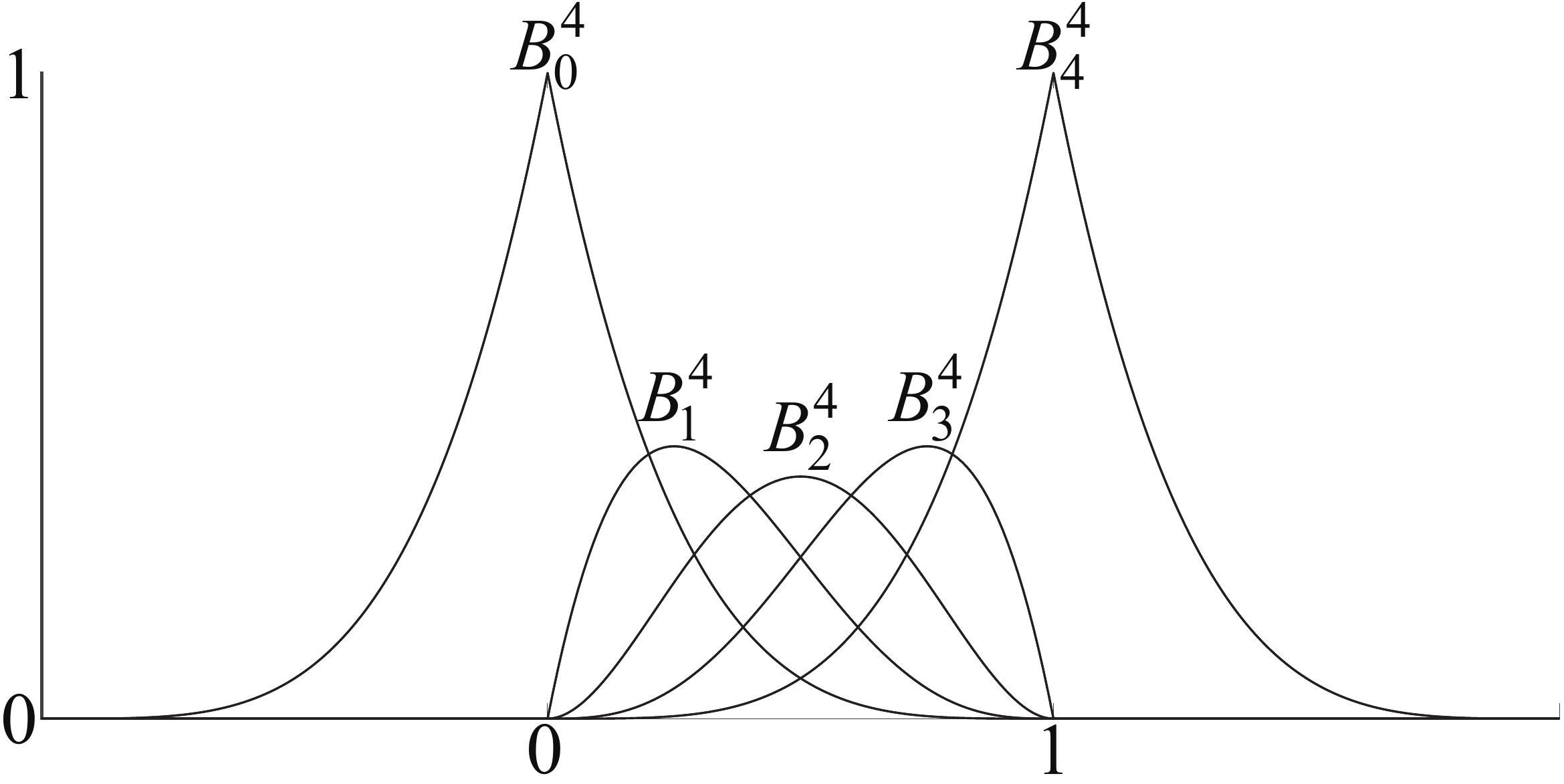}
    \caption{An example of the 4-th order Bernstein polynomials}
    \label{Fig:BernsteinPolynomials}
\end{figure}

\subsubsection{Construction of PU functions for an interface element}
It has been reported in \cite{zhang2014higher} that using linear basis functions $\phi^1_i$ for high-order GFEM may cause near-singularity in the stiffness matrix, whereas PU functions based on flat-top functions~\cite{oh2008piecewise, hong2013mesh} or higher-degree polynomials exhibit better stability. 
With these insights, we define high-order PU functions based on Bernstein polynomials for all interface elements. Specifically, for each interface element, we define one PU function.

We first consider the 1D definition of the Bernstein polynomials. Let $B^p_i(s)$ denote the $i$-th Bernstein polynomial of degree $p$ defined on the interval [0,1], which is
\begin{equation*}
    B^p_i(s) = \frac{p!}{i!(p-i)!} (1 - s)^{p - i} s^{i}, \ i = 0,\ldots, p.
\end{equation*}
It can be directly verified that $\sum_{i = 0}^p B^p_i(s) = 1$ for $s\in[0, 1]$. 
We refer to Figure~\ref{Fig:BernsteinPolynomials} for an example of quartic Bernstein polynomials. 
For the two-dimensional reference element $[0,1]\times[0,1]$, the bivariate polynomial $B^p_{i,j}(s, t)$ can be obtained by the tensor product of univariate polynomials in both directions, expressed as $B^p_{i,j}(s, t) := B^p_i(s) B^p_j(t)$. Bernstein functions on any quadrilateral element can be defined similarly via mapping the element to $[0,1]\times[0,1]$.

\begin{figure}[htbp]
    \centering
    \hspace{2cm}
    \subfloat[Nodes of Bernstein Polynomials in 2D]{\centering\includegraphics[width=0.2\linewidth]{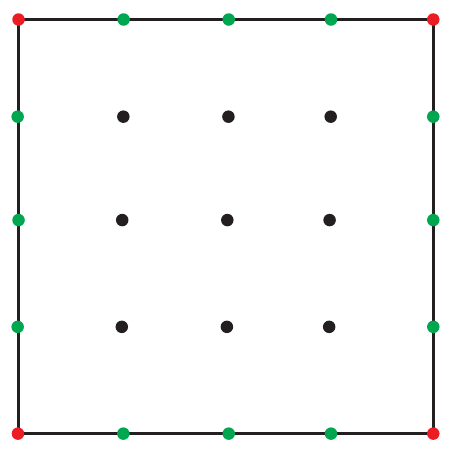}}
    \hfill
    \subfloat[Calculation of vertex coefficients]{\centering\includegraphics[width=0.2\linewidth]{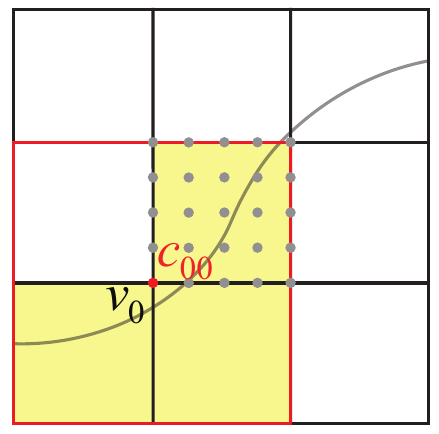}}
    \hfill
    \subfloat[Calculation of edge coefficients]{\centering\includegraphics[width=0.2\linewidth]{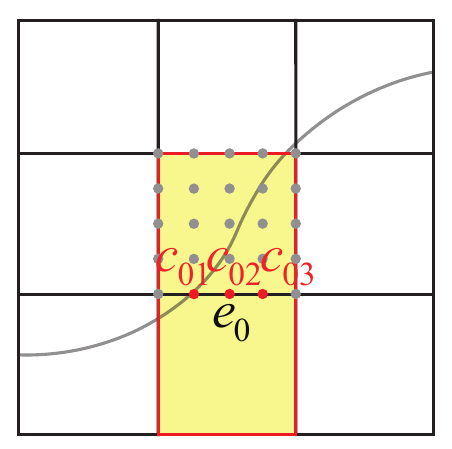}}
    \hspace{2cm}
    \caption{Construction of PU functions $\varphi_{k}(x, y)$.}\label{fig:computeCoef}
\end{figure}

Now we discuss how to define the PU function $\varphi^p_{k}(x, y)$ for the $k$-th interface element. We consider a generic interface element and omit the index $k$ for simplicity. 
Any bi-degree $p$-th order polynomial defined in the interface element can be represent as the linear combination of the Bernstein polynomials $B^p_{i,j}(x, y)$ with coefficients $c_{i, j}$, so suppose
$\varphi^p (x,y)$ has the form of
\begin{equation}\label{eq:PUfunction}
    \varphi^p(x, y) = \sum_{i = 0}^p \sum_{j = 0}^p c_{i,j} B^p_{i,j}(x, y),
\end{equation}
where the coefficients $c_{i,j}$ are to be determined. 
These coefficients can be categorized into three groups: (1) vertex coefficients $c_{0,0}, c_{p,0}, c_{0,p}, c_{p,p}$ which correspond to four vertices; (2) edge coefficients $c_{i,0}, c_{0,i}, c_{i,p}, c_{p,i}(i=1,\ldots,p-1)$ which correspond to four edges; and (3) element coefficients $c_{i,j}(i,j=1,\ldots,p-1)$, which correspond to the nodes inside the element. Figure \ref{fig:computeCoef}(a) shows the corresponding nodes using different colors for the associated coefficients $c_{i,j}$ for the example of bi-quartic element.
We give a simple procedure to compute these coefficients. 
\begin{itemize}
\item All the internal element coefficients are set to one, i.e., $c_{i,j} = 1$ for $i,j=1,\ldots,p-1$;

\item For the edge coefficients, if both adjacent elements are interface elements, then the coefficients are set to $\frac{1}{2}$, otherwise set them to one;

\item For the vertex coefficients, let $n_{v}$ denote the number of its adjacent elements that are interface elements; the coefficient is set to $\frac{1}{n_{v}}$.

\end{itemize}
For example, consider the vertex coefficient marked in red in Figure \ref{fig:computeCoef}(b). $n_{v}$ for the red vertex is $3$, thus $c_{0,0}=1/3.$
Similarly, for the edge coefficients marked in red in Figure \ref{fig:computeCoef}(c), the coefficients are $c_{0,1}=c_{0,2}=c_{0,3}=1/2.$

It can be verified that with these coefficients, $\varphi^p_{k}$ constitute a partition of unity property on the interface elements because
\begin{equation}
    \sum_{k \in I^0_{el}} \sum_{i,j=0}^pc^k_{ij}B^p_{ij} (x, y)
    = \sum_{i,j=0}^p \left(\sum_{k \in I^0_{el}} c^k_{ij}\right) B^p_{ij}(x, y) = \sum_{i,j=0}^pB^p_{ij} (x, y)= 1.
\end{equation}
To see this, we note that the Bernstein polynomials associated with the vertex and edge nodes have larger but still local support. 
We also remark that the function $\varphi^p$ can be analogously constructed using standard finite element basis functions (Lagrangian polynomials).

Figure~\ref{fig:PUfunctions} presents several PU function graphs on the circular interface. The newly constructed PU functions are different
from those existing linear FE basis functions $\phi^1_i$ or Hermite polynomial-based PU functions $H^1_i$~\eqref{eq:SGFEM2} in that each new PU function corresponds to one interface element rather than the vertex of interface elements. Furthermore, the polynomial degree of the PU function is the same as the FEM space. Numerical experiments demonstrate that the proposed PU function yields enhanced stability and robustness (corresponding to desired properties (b) and (c) discussed in Section \ref{sec:intr}).
\begin{figure}[htbp]
    \centering
    \includegraphics[width=0.7\linewidth]{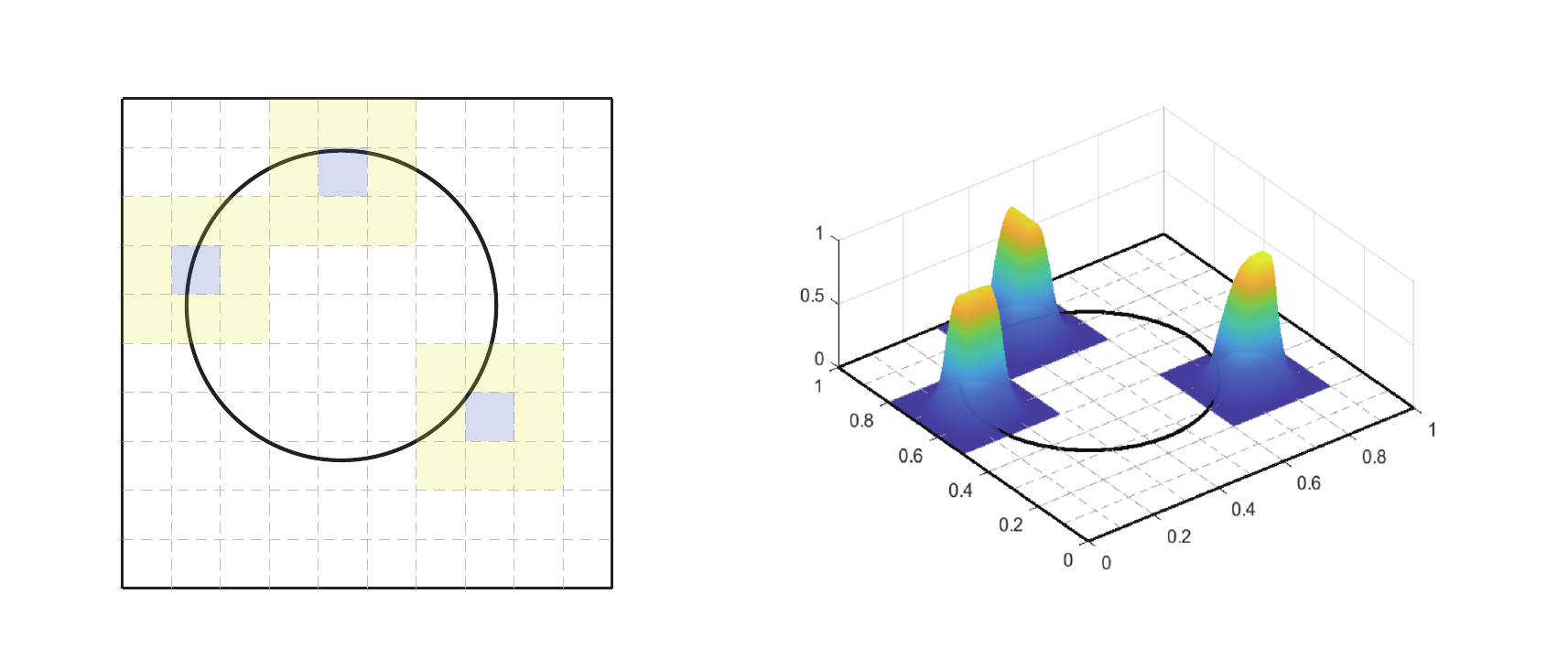}
    \caption{The left plot highlights several interface elements in light blue, and the right plot shows the corresponding PU functions.}
    \label{fig:PUfunctions}
\end{figure}

\subsubsection{Construction of the enrichment space}

In both constructions \eqref{eq:SGFEM} and~\eqref{eq:SGFEM2}, the distance function $d(\boldsymbol{x})$ and the interpolation operator $\mI$ have played an important role for optimal convergence and numerical stability as detailed in \cite{zhang2020stable, babuvska2012stable}. 
At each enriched node, the distance function is multiplied by a set of polynomials $x^iy^j$, $0 \leq i+j < p$. However, these auxiliary functions are global functions that limit the robustness of the numerical stability, especially for higher-order elements.  
With this in mind, we propose a shift for these functions for each interface element based on the centroid of the element. 
We find that this way of construction reduces the linear dependency among the functions, leading to better conditioning of the resulting system. 
For instance, suppose the centroid coordinates of the $k$-th interface element is $(x_k, y_k)$, then each PU function $\varphi^p_k$ multiplies the following polynomial set
\begin{equation}
\left \{(x - x_k)^i (y - y_k)^j, 0\leq i + j < p \right\} = \left\{1, \left(x - x_k\right) \ldots, \left(x - x_k\right) \left(y - y_k\right)^{p-2} ,\left(y - y_k\right)^{p-1} \right\}.
\end{equation}
Thus, the local enrichment function at each enriched node is $\varphi^p_k (d(x - x_k)^i (y - y_k)^j - \mathcal{I}^h(d(x - x_k)^i (y - y_k)^j))$.
Denote these functions to be $\{\xi_m\}_{m = 1, \ldots, p(p+1)/2}$, and then we perform Gram-Schmidt orthogonalization process (see Algorithm \ref{alg:gs}) to compute a set of new basis functions $\{\psi_m\}_{m = 1, \ldots, p(p+1)/2}$ for each interface element.
\begin{algorithm}
\caption{Gram-Schmidt Orthogonalization}
\label{alg:gs}
\begin{algorithmic}[1]
\Require $\{\xi_m\}_{m = 1,\ldots, p(p+1)/2}$.
\Ensure $\{\psi_m\}_{m = 1,\ldots, p(p+1)/2}$
\State $\tilde{\psi}_1 \gets \xi_1$
\For{$l = 2$ to $p(p+1)/2$}
\State $\tilde{\psi}_l \gets \xi_l$
\For{$j = 1$ to $l-1$}
\State $\tilde{\psi}_l \gets \tilde{\psi}_l - \frac{\left( \xi_l,\tilde{\psi}_j \right)}{ \left(\tilde{\psi}_j, \tilde{\psi}_j \right)}
\psi_j$, where $\left( f, g \right) = \int_{\Omega} fg dx $
\EndFor
\EndFor
\For{$i = 1$ to $p(p+1)/2$}
\State $\psi_i \gets \frac{\tilde{\psi}_i}{\Vert\tilde{\psi}_i\Vert} $
\EndFor
\end{algorithmic}
\end{algorithm}

The procedure is performed locally among $(p+1)p/2$ functions at each enriched element, where the integration is evaluated within the local support of the basis function associated with this element. For example, the yellow regions corresponding to the blue interface element in Figure~\ref{fig:PUfunctions} are the regions for the integration in the orthogonalization. 
This localized approach requires minimal additional computational effort. 
We remark that this Gram-Schmidt orthogonalization is not needed for $p\le4$. 
We observe in numerical simulation that this orthogonalization improves the conditioning of the system when $p\ge5.$
Let $\mathcal{L}$ denote the aforementioned orthogonalization operation, i.e., $\{\psi_m \} = \mathcal{L}\{\xi_m\}$, then the final enrichment space is defined as
\begin{equation}\label{eq:impSGFEMp}
    V^h_{ENR} =\text{span}\left\{ \mathcal{L} \left(\varphi^p_k d (x-x_k)^i(y - y_k)^j - \varphi^p_k
    \mathcal{I}_h\left( d (x-x_k)^i(y - y_k)^j \right) \right),\ k\in I^0_{el},\ 0 \leq i + j< p \right\}.
\end{equation}

With this enrichment space, the proposed HoSGFEM is of formulation \eqref{eq:GalApproProm} with the solution and test space defined in \eqref{eq:vhgfem}.
In Section \ref{sec:ea} below, we establish that the proposed method results in an optimally convergent approximate solution to the interface problem \eqref{eq:pde}, implying the desired property (a) on convergence. 
In Section \ref{sec:num} for numerical experiments, we present various numerical examples to demonstrate the desired property (b) numerical stability and (c) robustness. 
Thus, the proposed method is indeed a stable GFEM with a unified generalization to high-order elements.

\section{Error Analysis}\label{sec:ea}
In this section, we establish the optimal error convergence rates for the proposed HoSGFEM with the construction~\eqref{eq:impSGFEMp} for the interface problem~\eqref{eq:pde}. 
Unless otherwise specified, the notation in the following proof remains the same as before. 
For simplicity, we use $C$ to denote a generic constant that is independent of the element size $h$.

Assume that solution $u$ of problem~\eqref{eq:pde} is
\begin{equation}\label{eq:extendu}
     u(\boldsymbol{x})= \begin{cases}u_0(\boldsymbol{x}), & \boldsymbol{x} \in \bar{\Omega}_0,
     \\ u_1(\boldsymbol{x}), & \boldsymbol{x} \in \bar{\Omega}_1. \end{cases}
\end{equation}
Smoothly extending $u_i$ from $\Omega_i$ to entire domain $\Omega$, one gets $\tilde{u}_i \in H^{p + 1}(\Omega) \cap W^{p, \infty}(\Omega)$ satisfy
\begin{equation}\label{eq:propertyU}
    \tilde{u}_i(\boldsymbol{x}) = u_i(\boldsymbol{x})\ (\boldsymbol{x} \in \Omega_i), \quad \Vert \tilde{u}_i \Vert_{H^{p+1}(\Omega)} \leq C \Vert u_i \Vert_{H^{p+1}(\Omega_i)}\ \text{and}\  \Vert \tilde{u}_i \Vert_{W^{p, \infty}(\Omega)} \leq C \Vert u_i \Vert_{W^{p, \infty}(\Omega_i)}.
\end{equation}
Define $\tilde{u}$ as
\begin{equation}
    \tilde{u}(\boldsymbol{x}) = \begin{cases} \tilde{u}_0(\boldsymbol{x}) - \tilde{u}_1(\boldsymbol{x}), & \boldsymbol{x} \in \bar{\Omega}_0,\\0, & \boldsymbol{x} \in \bar{\Omega}_1. \end{cases}
\end{equation}
and define one-side distance function $\tilde{d}$
\begin{equation}
    \tilde{d}(\boldsymbol{x}) = \begin{cases} {d}(\boldsymbol{x}), & \boldsymbol{x} \in \bar{\Omega}_0,\\0, & \boldsymbol{x} \in \bar{\Omega}_1. \end{cases}
\end{equation}

Define $w_h := \sum_{i \in I^0_{el}} \phi_i (\mathcal{I} - \mathcal{I}_h)(\tilde{d} \eta_i)$, where $\mathcal{I}$ is the identity operator and $\eta_i$ is the polynomial of degree less than $p$. It is clear that $w_h \in V^h_{ENR}$. We have
\begin{equation}\label{eq:VenrApproach}
    \begin{split}
    \tilde{u} - \mathcal{I}_h \tilde{u} - w_h &= (\mathcal{I} - \mathcal{I}_h)\tilde{u} - \sum_{i \in I^0_{el}} \phi_i (\mathcal{I} - \mathcal{I}_h)(\tilde{d} \eta_i)\\
    & = (1 - \sum_{i \in I^0_{el}} \phi_i) (\mathcal{I} - \mathcal{I}_h)\tilde{u} +  \sum_{i \in I^0_{el}}\phi_i (\mathcal{I} - \mathcal{I}_h)\tilde{u} - \sum_{i \in I^0_{el}} \phi_i (\mathcal{I} - \mathcal{I}_h)(\tilde{d} \eta_i)\\
    & = (1 - \sum_{i \in I^0_{el}} \phi_i) (\mathcal{I} - \mathcal{I}_h)\tilde{u} +  \sum_{i \in I^0_{el}}\phi_i (\tilde{u} - \tilde{d} \eta_i) - \sum_{i \in I^0_{el}}\phi_i \mathcal{I}_h(\tilde{u} - \tilde{d} \eta_i).
    \end{split}
\end{equation}
Let $\bar{\phi} := (1 - \sum_{i \in I^0_{el}} \phi_i)$. Given that $\sum_{i \in I^0_{el}} \phi_i = 1$ on interface elements, it follows that $\text{supp}(\bar{\phi}) \cap \Gamma = \emptyset$. Therefore, the error in the first term of the final line follows directly from standard FEM interpolation estimates for smooth functions. Now we establish the estimate for the $\tilde{u} - \tilde{d}\eta_i$ in Lemma \ref{lemma1}.
\begin{lemma}\label{lemma1}
    For an interface element $e_i$, there exists a polynomial $\eta_i$ of degree $(p - 1)$ such that
    \begin{equation}\label{eq:AppendLemma}
        \Vert \tilde{u}  - \tilde{d}\eta_i \Vert^2_{H^l(\omega_i)} \leq C h^{2(p + 1-l)} \Vert \tilde{u}_0 - \tilde{u}_1 \Vert^2_{H^{p + 1}(\omega_i)}
    \end{equation}
    \begin{equation}
        \Vert \mathcal{I}_h (\tilde{u}  - \tilde{d}\eta_i) \Vert^2_{H^l(\omega_i)} \leq C h^{2(p + 1 - l)}\Vert \tilde{u}_0- \tilde{u}_1 \Vert^2_{H^{p + 1}(\omega_i)}
    \end{equation}
where $\omega_i$ is the support of PU function $\phi_i$ corrosponding to interface element $e_i$.
\end{lemma}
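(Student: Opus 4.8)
The plan is to reduce both inequalities to a single polynomial–approximation statement on the $\Omega_0$–side of the patch and then to exploit a division (Hardy–type) argument made available by the interface condition $\llbracket u\rrbracket_{\Gamma}=0$. Since $\tilde d\equiv 0$ and $\tilde u\equiv 0$ on $\bar\Omega_1$, the function $z:=\tilde u-\tilde d\,\eta_i$ vanishes identically on $\omega_i\cap\Omega_1$, so it suffices to control $z$ on $\omega_i\cap\Omega_0$. Let $\rho$ be the signed distance to $\Gamma$, normalized positive on $\Omega_0$; then $\rho$ is $C^\infty$ in a fixed tubular neighborhood of $\Gamma$ containing every $\omega_i$ once $h$ is small, $\|\rho\|_{L^\infty(\omega_i)}\le Ch$, $\|\nabla^m\rho\|_{L^\infty(\omega_i)}\le C$ for $m\ge1$, and $\tilde d=\rho$ on $\Omega_0$. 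Setting $v:=\tilde u_0-\tilde u_1\in H^{p+1}(\omega_i)$, the condition $\llbracket u\rrbracket_{\Gamma}=0$ forces $u_0=u_1$, hence $v=0$, on $\Gamma=\{\rho=0\}$; a standard division lemma then gives $g:=v/\rho\in H^{p}(\omega_i)$ with $\|g\|_{H^p(\omega_i)}\le C\|v\|_{H^{p+1}(\omega_i)}$, the constant depending only on $\Gamma$ and the mesh shape–regularity, in particular independent of $h$ and of the chosen interface element. I then let $\eta_i$ be a degree–$(p-1)$ polynomial approximating $g$ on $\omega_i$ in the Bramble–Hilbert sense (for instance, an averaged Taylor polynomial centered at the centroid of $e_i$), so that $|g-\eta_i|_{H^m(\omega_i)}\le Ch^{\,p-m}\|g\|_{H^p(\omega_i)}$ for $0\le m\le p$.

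For the first inequality, on $\omega_i\cap\Omega_0$ we have $z=\rho\,(g-\eta_i)$, and a Leibniz expansion together with the bounds on $\rho$ yields
\[
\|z\|_{H^l(\omega_i)}\;\le\;C\big(h\,|g-\eta_i|_{H^l(\omega_i)}+\|g-\eta_i\|_{H^{l-1}(\omega_i)}\big)\;\le\;C\,h^{\,p+1-l}\|g\|_{H^p(\omega_i)}\;\le\;C\,h^{\,p+1-l}\|\tilde u_0-\tilde u_1\|_{H^{p+1}(\omega_i)},
\]
which, after squaring, is \eqref{eq:AppendLemma}.

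For the interpolation inequality, $z$ is only Lipschitz across $\Gamma$ (its normal derivative jumps, so $z\notin H^2$ on cut elements), and textbook interpolation–error estimates do not apply; instead I bound $\mathcal I_h z$ through its nodal values. At a finite element node $\boldsymbol x_j\in\bar\Omega_1$ one has $z(\boldsymbol x_j)=0$, while at a node $\boldsymbol x_j\in\Omega_0$ one has $|z(\boldsymbol x_j)|=|\rho(\boldsymbol x_j)|\,|g(\boldsymbol x_j)-\eta_i(\boldsymbol x_j)|\le Ch\,\|g-\eta_i\|_{L^\infty(\omega_i)}$. A scaled Sobolev embedding ($H^2\hookrightarrow L^\infty$ in two dimensions) together with the Bramble–Hilbert bounds gives $\|g-\eta_i\|_{L^\infty(\omega_i)}\le Ch^{\,p-1}\|g\|_{H^p(\omega_i)}$, whence $\max_j|z(\boldsymbol x_j)|\le Ch^{\,p}\|v\|_{H^{p+1}(\omega_i)}$. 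Using equivalence of norms on the ($O(1)$–dimensional, $h$–scaled) space of element shape functions, $\|\mathcal I_h z\|_{L^2(\omega_i)}\le Ch\max_j|z(\boldsymbol x_j)|\le Ch^{\,p+1}\|v\|_{H^{p+1}(\omega_i)}$, and an inverse estimate promotes this to $\|\mathcal I_h z\|_{H^l(\omega_i)}\le Ch^{-l}\|\mathcal I_h z\|_{L^2(\omega_i)}\le Ch^{\,p+1-l}\|v\|_{H^{p+1}(\omega_i)}$, which is the second claimed bound.

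The step I expect to be the main obstacle is the division lemma: making precise, with a constant independent of $h$ and of where the interface element sits, that $v/\rho$ retains $H^{p}$–regularity with norm controlled by $\|v\|_{H^{p+1}}$. This is exactly where $\llbracket u\rrbracket_{\Gamma}=0$ and the smoothness of $\Gamma$ are used, and the resulting loss of one derivative is what makes a polynomial $\eta_i$ of degree exactly $p-1$ both necessary and sufficient. A secondary point is that, because $z$ is not regular enough on cut elements for standard interpolation estimates, the bound on $\mathcal I_h z$ must go through nodal values and inverse estimates rather than through a Bramble–Hilbert argument applied to $z$ itself.
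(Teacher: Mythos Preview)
Your approach is correct and takes a genuinely different route from the paper's. The paper does not invoke a division lemma; it works directly with Taylor polynomials in a local normal--tangential frame at a point $\boldsymbol y_i\in\Gamma\cap\omega_i$. Writing $T^{p+1}_i\tilde u$ and $T^{p+1}_i\tilde d$ for these expansions, the paper splits
\[
\tilde u-\tilde d\,\eta_i=(\tilde u-T^{p+1}_i\tilde u)+\bigl(T^{p+1}_i\tilde u-(T^{p+1}_i\tilde d)\eta_i\bigr)+\bigl((T^{p+1}_i\tilde d-\tilde d)\eta_i\bigr),
\]
bounds the outer two pieces by Taylor remainders, and determines the coefficients of $\eta_i$ \emph{explicitly} by an inductive (triangular) system so that the middle piece has no monomials of order $\le p$. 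The algebraic inputs there---$d_{1,0}=\partial_{\boldsymbol n}d=1$ and $u_{0,j}=d_{0,j}=0$ on $\Gamma$---are exactly the pointwise analogue of your division step. For the interpolation bound the paper likewise controls nodal values through the same Taylor decomposition and then uses $\|\phi^p_j\|_{H^l(\omega_i)}\lesssim h^{1-l}$, which is your inverse-estimate step.

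What each route buys: the paper's argument is constructive and self-contained (no external Hardy result), and yields explicit formulas for the coefficients of $\eta_i$. Your argument is shorter and more conceptual once the division lemma is available, and it makes transparent why a polynomial of degree exactly $p-1$ suffices: the map $v\mapsto v/\rho$ costs one derivative. Your identification of the local, $h$-uniform bound $\|v/\rho\|_{H^p(\omega_i)}\le C\|v\|_{H^{p+1}(\omega_i)}$ as the crux is accurate; it follows from the integral representation $g=\int_0^1\partial_\rho v(\cdot,t\rho)\,dt$ in normal coordinates together with Minkowski's inequality, with constants depending only on the curvature of $\Gamma$ and the shape regularity. One caveat: your Sobolev-embedding step for the nodal $L^\infty$ bound needs $p\ge 2$ in two dimensions; the paper sidesteps this by using the assumed $W^{p,\infty}$ regularity of $\tilde u_i$, which also covers $p=1$.
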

\begin{proof}
    Given that $\tilde{u}$ and $\tilde{d}$ are zero on $\Omega_1$, we immediately get
    \begin{equation}
        \Vert \tilde{u} - \tilde{d} \eta_i \Vert^2_{H^l(\omega_i \cap \Omega_1)} = 0.
    \end{equation}
    Now we estimate the part $\Vert \tilde{u} - \tilde{d} \eta_i \Vert^2_{H^l(\omega_i \cap \Omega_0)}$. Let $\boldsymbol{n}$ and $\boldsymbol{\tau}$ be the unit normal vector and unit tangent vector at a point $\boldsymbol{y}_i \in \Gamma \cap \omega_i$.
    \begin{figure}[htbp]
    \centering
    \includegraphics[width=0.25\linewidth]{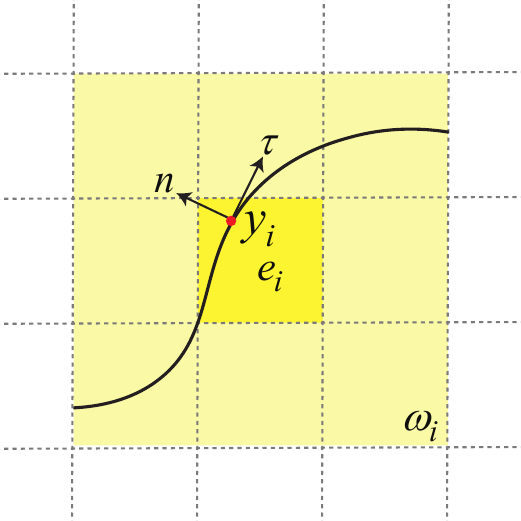}
    \caption{Illustration of local coordinates}
    \label{Fig:local coordinates}
    \end{figure}
    
    Noticing that $\boldsymbol{n}$ and $\boldsymbol{\tau}$ form a local orthogonal coordinate system, we consider the Taylor polynomial at the point $\boldsymbol{y}_i$ using these orthogonal vectors as the coordinate system. To simplify notation, let $\boldsymbol{\alpha} = (\alpha_0, \alpha_1)$ be a multi-index, denote $\boldsymbol{\alpha}! := a_0! a_1!$, $|\boldsymbol{\alpha}| := a_0 + a_1$, $ (\boldsymbol{x} - \boldsymbol{y}_i)^{\boldsymbol{\alpha}} := [(\boldsymbol{x} - \boldsymbol{y}_i) \cdot \boldsymbol{n}]^{a_0} [(\boldsymbol{x} - \boldsymbol{y}_i) \cdot \boldsymbol{\tau}]^{a_1}$ and
    \begin{equation}
        D^{\boldsymbol{\alpha}} := \underbrace{ \frac{\partial}{\partial \boldsymbol{\tau}} \cdots \frac{\partial}{\partial \boldsymbol{\tau}}}_{a_1} \underbrace{ \frac{\partial}{\partial \boldsymbol{n}} \cdots \frac{\partial}{\partial \boldsymbol{n}}}_{a_0}.
    \end{equation}
    Taylor polynomial of order $p$ for function $\tilde{u}$ at $\boldsymbol{y}_i$ is
    \begin{equation}
        T^p_i \tilde{u}(\boldsymbol{x}) = \sum_{|\boldsymbol{\alpha}| < p} \frac{1}{\boldsymbol{\alpha}!} D^{\boldsymbol{\alpha}}\tilde{u}(\boldsymbol{y}_i) (\boldsymbol{x} - \boldsymbol{y}_i)^{\mathbf{\boldsymbol{\alpha}}}.
    \end{equation}
    For $\boldsymbol{x} \in \omega_i \cap \Omega_0$, there holds
    \begin{equation}
        \begin{aligned}
             \left\Vert \tilde{u} -\tilde{d}\eta_i \right\Vert^2_{H^l(\omega_i \cap \Omega_0)} & =
            \left\Vert \tilde{u} - T^{p+1}_i\tilde{u} +T^{p+1}_i\tilde{u} - (T^{p+1}_i\tilde{d})\eta_i + (T^{p+1}_i\tilde{d})\eta_i -\tilde{d}\eta_i
            \right\Vert^2_{H^l(\omega_i \cap \Omega_0)}\\
            & \leq 3\left\Vert \tilde{u} - T^{p+1}_i\tilde{u} \right\Vert^2_{H^l(\omega_i \cap \Omega_0)} + 3\left\Vert T^{p+1}_i\tilde{u} - (T^{p+1}_i\tilde{d})\eta_i \right\Vert^2_{H^l(\omega_i \cap \Omega_0)} + 3\left\Vert (T^{p+1}_i\tilde{d})\eta_i -\tilde{d}\eta_i \right\Vert^2_{H^l(\omega_i \cap \Omega_0)}.
        \end{aligned}
    \end{equation}
    The estimate of the first and the third term of the equation above is
    \begin{equation}
        \begin{aligned}
         \left\Vert \tilde{u} - T^{p+1}_i\tilde{u} \right\Vert^2_{H^l(\omega_i \cap \Omega_0)}  \leq &\ \left\Vert \tilde{u}_0 - \tilde{u}_1 - T^{p+1}_i (\tilde{u}_0 - \tilde{u}_1) \right\Vert^2_{H^l(\omega_i)} \leq C h^{2(p +1-l)}\left\Vert \tilde{u}_0 - \tilde{u}_1 \right\Vert^2_{H^{p+1}(\omega_i)}, \\
            \left\Vert (T^{p+1}_i\tilde{d})\eta_i -\tilde{d}\eta_i \right\Vert^2_{H^l(\omega_i \cap \Omega_0)} \leq &\ C\left\Vert (T^{p+1}_i\tilde{d})\eta_i -\tilde{d}\eta_i \right\Vert^2_{W^{l, \infty}(\omega_i \cap \Omega_0)}|\omega_i \cap \Omega_0|\\
            \leq &\ C\left\Vert T^{p+1}_i\tilde{d} -\tilde{d} \right\Vert^2_{W^{l, \infty}(\omega_i \cap \Omega_0)} \left\Vert \eta_i \right\Vert^2_{W^{l, \infty}(\omega_i \cap \Omega_0)} |\omega_i|\\
            \leq &\ C h^{2 (p+2 - l)} \left\Vert \eta_i \right\Vert^2_{W^{l, \infty}},
        \end{aligned}
    \end{equation}
where $|\omega_i|$ is the area of $\omega_i$.

Now we will estimate the second term
\begin{equation}\label{eq:ComputeEps}
    \epsilon = T^{p+1}_i \tilde{u} - (T^{p+1}_i \tilde{d})\eta_i.
\end{equation}

For simplicity, denote by $u_{\boldsymbol{\alpha}}$ and $d_{\boldsymbol{\alpha}}$ the coefficients of the polynomial $(\boldsymbol{x} - \boldsymbol{y}_i)^{\boldsymbol{\alpha}}$ of $T^{p+1}_i \tilde{u}$ and $ T^{p+1}_i \tilde{d}$. 
We write the  polynomial $\eta_i$ of degree $p- 1$ in the form
$$\eta_i = \sum_{|\boldsymbol{\gamma}| < p} c_{\boldsymbol{\gamma}} (\boldsymbol{x} - \boldsymbol{y}_i)^{\boldsymbol{\gamma}},$$
where $c_{\boldsymbol{\gamma}}$ is the coefficients to be determined.

By expanding the second term in equation~\eqref{eq:ComputeEps}, we get
\begin{align}
    \epsilon = &\sum_{|\boldsymbol{\alpha}| < p+1} u_{\boldsymbol{\alpha}} (\boldsymbol{x} - \boldsymbol{y}_i)^{\boldsymbol{\alpha}} - \left(\sum_{|\boldsymbol{\beta}| < p + 1} d_{\boldsymbol{\beta}} (\boldsymbol{x} - \boldsymbol{y}_i)^{\boldsymbol{\beta}}\right) \left( \sum_{|\boldsymbol{\gamma}| < p } c _{\boldsymbol{\gamma}} (\boldsymbol{x} - \boldsymbol{y}_i)^{\boldsymbol{\gamma}} \right)\\
    = & \sum_{|\boldsymbol{\alpha}| < 2p} \epsilon_{\boldsymbol{\alpha}} (\boldsymbol{x} - \boldsymbol{y}_i)^{\boldsymbol{\alpha}}.
\end{align}
We consider to determine the $c_{\boldsymbol{\gamma}}$ to make the $\epsilon_{\boldsymbol{\alpha}} = 0$ for $|\boldsymbol{\alpha}| < p + 1$. Let $\boldsymbol{\alpha} = (i,j), \boldsymbol{\gamma} = (m,n)$, the coefficient $\epsilon_{i, j}$ is computed as
\begin{align}\label{eq: proof-coe}
    \epsilon_{i,j} = u_{i,j} - \sum_{\substack{0 \leq m \leq i \\ 0 \leq n \leq j}} d_{i - m,j - n}c_{m, n}, 0 \leq i + j \leq p.
\end{align}
Since $\tilde{u}$ and $\tilde{d}$ are zero on the interface $\Gamma$, their corresponding tangential derivative coefficients $u_{0,j}$ and $d_{0,j}$ are zero for $j \leq p$.  Furthermore, according to the definition of distance function, we have $d_{1,0}= 1$, which has been proved in~\cite{zhang2020stable,zhu2020stable}. Then we immediately have $\epsilon_{0, j} = 0$ for  $j \leq p$. We only need to consider the coefficients $\epsilon_{i, j} (i + j \leq p$, $i \neq 0)$. The number of these coefficients is $(1+p)p/2$, which is equal to the number of unknowns $c_{m, n}(m+n < p)$, thus, there must be a set of $c_{m, n}$ satisfying the conditions. 
In fact, one can determine $c_{m, n}$ explicitly by induction. Firstly, when $m + n = 0$, we have
\begin{equation}
    \epsilon_{1, 0} = u_{1, 0} - d_{1, 0} c_{0, 0} - d_{0, 0}c_{1, 0} = 0,
\end{equation}
which means $c_{0, 0} = u_{1, 0}$. Assume that $c_{m,n} (m + n < l)$ has been determined, then  $c_{m,n}(m + n = l)$ can be determined by the following equation
\begin{equation}\label{eq:ComputeCmn}
\begin{split}
    c_{m, n} & = c_{m, n} d_{1, 0} \\
    & = u_{m + 1, n} - c_{m+1, n} d_{0, 0} - c_{m + 1, n - 1} d_{0, 1} - \sum_{i+j < l} c_{i, j} d_{m - i + 1, n - j}\\
    & = u_{m + 1, n} - \sum_{i + j < l} c_{i,j} d_{m - i + 1, n - j}.
\end{split}
\end{equation}
Therefore, it is easy to verify that each $c_{m, n}$ is uniquely determined to satisfy condition $\epsilon_{m + 1, n} = 0$.

After determining the coefficients $c_{m, n}$, equation~\eqref{eq:ComputeEps} can be express as
\begin{equation}
    \epsilon = T^{p + 1}_i \tilde{u} - \left(T^{p + 1}_i \tilde{d}\right)\eta_i = \sum_{|\boldsymbol{\alpha}| > p} \epsilon_{\boldsymbol{\alpha}} (\boldsymbol{x} - \boldsymbol{y}_i)^{\boldsymbol{\alpha}},
\end{equation}
and $\epsilon_{\boldsymbol{\alpha}}$ is the function of the partial derivatives $\tilde{u}$ and $\tilde{d}$, which does not depend on
element size $h$. 
As a consequence, we have
\begin{equation}
    \left\Vert T^{p+1}_i \tilde{u} - (T^{p+1}_i \tilde{d})\eta_i \right\Vert^2_{H^l (\omega_i \cap \Omega_0)} \leq C \left\Vert \sum_{|\boldsymbol{\alpha}| > p} \epsilon_{\boldsymbol{\alpha}} (\boldsymbol{x} - \boldsymbol{y}_i)^{\boldsymbol{\alpha}}\right\Vert^2_{W^{l, \infty} (\omega_i \cap \Omega_0)} \left|\omega_i \cap \Omega_0\right| \leq C h^{2 (p + 2 - l)}.
\end{equation}
Finally, we get the estimate for $\left\Vert \tilde{u}  - \tilde{d}\eta_i \right\Vert^2_{H^l(\omega_i)}$
\begin{equation}
    \begin{split}
         \left\Vert \tilde{u}  - \tilde{d}\eta_i \right\Vert^2_{H^l(\omega_i)}
         \leq & C h^{2 (p + 1 - l)}\left\Vert \tilde{u}_0 - \tilde{u}_1 \right\Vert^2_{H^{p+ 1}(\omega_i)} + C h^{2 (p + 2 - l)} + C h^{2 (p + 2 - l)} \left\Vert \eta_i \right\Vert^2_{W^{1, \infty}(\omega_i)}\\
         \leq & C h^{2 (p + 1 - l)} \left\Vert \tilde{u}_0 - \tilde{u}_1  \right\Vert^2_{H^{p+1}(\omega_i)}.
    \end{split}
\end{equation}
For the second term, we have
\begin{equation}\label{eq:ComputeItpt}
    \begin{split}
    \mathcal{I}_h (\tilde{u}  - \tilde{d}\eta_i) = & \sum_{\boldsymbol{x}_j \in \omega_i } \left(\tilde{u} (\boldsymbol{x}_j) - \tilde{d}(\boldsymbol{x}_j) \eta_i(\boldsymbol{x}_j)\right) \phi^p_j \\
    = & \sum_{\boldsymbol{x}_j \in \omega_i } \left(\tilde{u} (\boldsymbol{x}_j) - (T^{p+1}_i \tilde{u})(\boldsymbol{x}_j)\right)\phi^p_j +
    \sum_{x_j \in \omega_i } \left((T^{p+1}_i \tilde{u})(\boldsymbol{x}_j )- \tilde{d}(\boldsymbol{x}_j) \eta_i(\boldsymbol{x}_j)\right) \phi^p_j,
    \end{split}
\end{equation}
where $\boldsymbol{x}_j$ are FE nodes. For the coefficient of $ \phi^p_j$, we have
\begin{equation}\label{eq:ComputeItptC0}
    \left|(\tilde{u} (\boldsymbol{x}_j) - (T^{p+1}_i \tilde{u})(\boldsymbol{x}_j)\right| \leq \left\Vert \tilde{u}- T^{p+1}_i \tilde{u}
    \right\Vert_{L^{\infty}(\omega_i)} \leq C h^{p+1} \left\Vert \tilde{u}_0 - \tilde{u}_1\right\Vert_{H^{p + 1}(\omega_i)},
\end{equation}
\begin{equation}\label{eq:ComputeItptC1}
    \left|(T^{p+1}_i \tilde{u})(\boldsymbol{x}_j )- \tilde{d}(\boldsymbol{x}_j) \eta_i(\boldsymbol{x}_j)\right| \leq
    \left\Vert T^{p+1}_i \tilde{u} - (T^{p+1}_i\tilde{d}) \eta_i + (T^{p+1}_i\tilde{d}) \eta_i  - \tilde{d} \eta_i\right\Vert_{L^{\infty}(\omega_i)} \leq C h^{p + 1}.
\end{equation}
Combining equations~\eqref{eq:ComputeItpt},~\eqref{eq:ComputeItptC0} and ~\eqref{eq:ComputeItptC1}, we obtain
\begin{equation}
    \begin{split}
        \left\Vert \mathcal{I}_h \left(\tilde{u}  - \tilde{d}\eta_i\right) \right\Vert^2_{H^l(\omega_i)} & \leq C \left[ \max_{\boldsymbol{x}_j \in \omega_i}{
        \left|\left(\tilde{u} - T^{p+1}_i \tilde{u}\right)(\boldsymbol{x}_j)\right|^2}\right] \sum_{\boldsymbol{x}_j \in \omega_i}
        \left\Vert \phi^p_j \right\Vert^2_{H^{l}(\omega_i)}\nonumber\\
        & +  C \left[ \max_{\boldsymbol{x}_j \in \omega_i } \left|T^{p+1}_i \left(\tilde{u}- \tilde{d}\eta_i\right)(\boldsymbol{x}_j)\right|^2 \right]
        \sum_{\boldsymbol{x}_j \in \omega_i} \left\Vert \phi^p_j \right\Vert^2_{H^{l}(\omega_i)}\nonumber\\
        & \leq C h^{2 (p + 1 - l)} \left\Vert \tilde{u}_0 - \tilde{u}_1 \right\Vert^2_{H^{p + 1}(\omega_i)},
    \end{split}
\end{equation}
which completes the proof.
\end{proof}

\begin{remark}
We give an example for determining the coefficient when $p = 3$. If $m + n = 1$, there exist two constraints 
\begin{equation}
    \begin{split}
        \epsilon_{2, 0} & = u_{2, 0} - d_{2, 0} c_{0, 0} - d_{1, 0} c_{1,0} = 0,\\
        \epsilon_{1, 1} & = u_{1, 1} - d_{1, 1} c_{0, 0} - d_{1, 0} c_{0,1} = 0.
    \end{split}
\end{equation}
So $c_{1, 0}$ and $c_{0, 1}$ can be computed as
\begin{equation}
    \begin{split}
        c_{1,0} &= u_{2, 0} - d_{2, 0} c_{0, 0}, \\
        c_{0,1} &= u_{1, 1} - d_{1, 1} c_{0, 0}.
    \end{split}
\end{equation}
If $m + n = 2$, there exist three constraints, 
\begin{equation}
    \begin{split}
        \epsilon_{3, 0} & = u_{3, 0} - d_{3, 0} c_{0, 0} - d_{2, 0} c_{1,0} - d_{1, 0} c_{2, 0}= 0,\\
        \epsilon_{2, 1} & = u_{2, 1} - d_{2, 1} c_{0, 0} - d_{1, 1} c_{1,0} - d_{2, 0} c_{0, 1} - d_{1, 0} c_{1, 1}= 0,\\
        \epsilon_{1, 2} & = u_{1, 2} - d_{1, 2} c_{0, 0} - d_{0, 2} c_{1,0} - d_{1, 1} c_{0, 1} - d_{1, 0} c_{0, 2}= 0.
    \end{split}
\end{equation}
Thus, $c_{2, 0}$, $c_{1, 1}$ and $c_{0, 2}$ can be solved as follows
\begin{equation}
    \begin{split}
        c_{2, 0} & = u_{3, 0} - d_{3, 0} c_{0, 0} - d_{2, 0} c_{1,0},\\
        c_{1, 1} & = u_{2, 1} - d_{2, 1} c_{0, 0} - d_{1, 1} c_{1,0} - d_{2, 0} c_{0, 1},\\
        c_{0, 2} & = u_{1, 2} - d_{1, 2} c_{0, 0} - d_{0, 2} c_{1,0} - d_{1, 1} c_{0, 1}.
    \end{split}
\end{equation}
\end{remark}

With the estimation of $\tilde{u} - \tilde{d}\eta_i $, we can derive the estimation of equation~\eqref{eq:VenrApproach}, which is proved in the following lemma.
\begin{lemma}\label{lemma2}Suppose $\eta_i$ is the polynomials that are computed in Lemma \ref{lemma1} and let $w_h := \sum_{i \in I^0_{el}} \phi_i (\mathcal{I} - \mathcal{I}_h)(\tilde{d}  \eta_i)$, then we have
    \begin{equation}
        \left| \tilde{u} - \mathcal{I} \tilde{u} - w_h \right|_{H^1(\Omega)} \leq C  h^{p}\left\Vert \tilde{u}_0 - \tilde{u}_1 \right\Vert_{H^{p+1}(\Omega)}.
    \end{equation}
\end{lemma}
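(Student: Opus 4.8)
The plan is to start from the identity \eqref{eq:VenrApproach}, which decomposes $\tilde u - \mathcal I_h\tilde u - w_h$ into three pieces, and bound the $H^1(\Omega)$-seminorm of each. Write
\[
\tilde u - \mathcal I_h\tilde u - w_h = \bar\phi\,(\mathcal I - \mathcal I_h)\tilde u + \sum_{i\in I^0_{el}}\phi_i(\tilde u - \tilde d\eta_i) - \sum_{i\in I^0_{el}}\phi_i\mathcal I_h(\tilde u - \tilde d\eta_i).
\]
For the first term, since $\operatorname{supp}(\bar\phi)\cap\Gamma=\emptyset$ and $\bar\phi$ is a fixed piecewise polynomial of bounded $W^{1,\infty}$ norm, the function $\tilde u$ is smooth on $\operatorname{supp}(\bar\phi)$ (no interface crossing), so the standard FEM interpolation estimate gives $|\bar\phi(\mathcal I-\mathcal I_h)\tilde u|_{H^1(\Omega)}\le C h^{p}\|\tilde u\|_{H^{p+1}(\Omega)}$; here one uses the product rule on $\bar\phi\cdot(\mathcal I-\mathcal I_h)\tilde u$ together with both the $L^2$ and $H^1$ interpolation bounds on $(\mathcal I-\mathcal I_h)\tilde u$, and absorbs $\|\bar\phi\|_{W^{1,\infty}}$ into $C$.

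For the second and third terms I would work element by element (more precisely patch by patch over the supports $\omega_i$). The key point is that the local overlap of the patches $\{\omega_i\}_{i\in I^0_{el}}$ is bounded by a constant depending only on the mesh shape-regularity, so $\big|\sum_i \phi_i g_i\big|_{H^1(\Omega)}^2 \le C\sum_i |\phi_i g_i|_{H^1(\omega_i)}^2$. Apply the product rule to $\phi_i g_i$ with $g_i = \tilde u - \tilde d\eta_i$ (resp. $g_i = \mathcal I_h(\tilde u-\tilde d\eta_i)$): $|\phi_i g_i|_{H^1(\omega_i)} \le \|\nabla\phi_i\|_{L^\infty}\|g_i\|_{L^2(\omega_i)} + \|\phi_i\|_{L^\infty}|g_i|_{H^1(\omega_i)} \le C h^{-1}\|g_i\|_{L^2(\omega_i)} + C|g_i|_{H^1(\omega_i)}$, using the inverse-type bound $\|\nabla\phi_i\|_{L^\infty}\le Ch^{-1}$ for the Bernstein-based PU functions and $\|\phi_i\|_{L^\infty}\le C$. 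Now invoke Lemma~\ref{lemma1} with $l=0$ and $l=1$: $\|g_i\|_{L^2(\omega_i)}\le Ch^{p+1}\|\tilde u_0-\tilde u_1\|_{H^{p+1}(\omega_i)}$ and $|g_i|_{H^1(\omega_i)}\le Ch^{p}\|\tilde u_0-\tilde u_1\|_{H^{p+1}(\omega_i)}$, and likewise for the interpolated function. Hence $|\phi_i g_i|_{H^1(\omega_i)}\le Ch^{p}\|\tilde u_0-\tilde u_1\|_{H^{p+1}(\omega_i)}$ in both cases.

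Squaring, summing over $i\in I^0_{el}$, and using finite overlap of the $\omega_i$ once more gives
\[
\Big|\sum_{i\in I^0_{el}}\phi_i(\tilde u-\tilde d\eta_i)\Big|_{H^1(\Omega)}^2 + \Big|\sum_{i\in I^0_{el}}\phi_i\mathcal I_h(\tilde u-\tilde d\eta_i)\Big|_{H^1(\Omega)}^2 \le C h^{2p}\sum_{i\in I^0_{el}}\|\tilde u_0-\tilde u_1\|_{H^{p+1}(\omega_i)}^2 \le C h^{2p}\|\tilde u_0-\tilde u_1\|_{H^{p+1}(\Omega)}^2,
\]
where the last step again absorbs the bounded patch-overlap constant. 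Combining the three bounds via the triangle inequality (and noting $\|\tilde u\|_{H^{p+1}(\Omega)}\le C\|\tilde u_0-\tilde u_1\|_{H^{p+1}(\Omega)}$ is \emph{not} literally what we want for the first term — there one genuinely has $\|\tilde u\|_{H^{p+1}}$, but on $\operatorname{supp}\bar\phi$, away from $\Gamma$, $\tilde u$ agrees with a single smooth $\tilde u_j$, so its norm is controlled by $\|\tilde u_0-\tilde u_1\|_{H^{p+1}}$ up to the data of the problem, or one simply states the bound with $\|\tilde u\|_{H^{p+1}(\Omega)}$ and rolls it into the final statement) yields the claimed estimate.

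\textbf{Main obstacle.} The routine calculations are the product-rule splittings; the one genuinely delicate point is handling the first term $\bar\phi(\mathcal I-\mathcal I_h)\tilde u$ cleanly — one must argue that although $\tilde u$ is only piecewise smooth globally, the cutoff $\bar\phi$ vanishes on a neighborhood of $\Gamma$ (because $\sum_{i\in I^0_{el}}\phi_i\equiv 1$ on all interface elements, so $\bar\phi$ is supported away from $\Gamma$), so that on $\operatorname{supp}\bar\phi$ the function $\tilde u$ coincides with one of the globally smooth extensions $\tilde u_j\in H^{p+1}(\Omega)$ and the classical interpolation estimate applies. The second mild subtlety is keeping the mesh-overlap constants under control so that the local-to-global passage does not lose powers of $h$; this is standard for quasi-uniform, shape-regular meshes and the locally supported PU functions constructed in Section~\ref{sec:hosgfem}.
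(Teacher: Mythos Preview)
Your proposal follows essentially the same route as the paper's proof: start from the three-term decomposition \eqref{eq:VenrApproach}, handle the $\bar\phi$ term by the standard interpolation estimate away from $\Gamma$, and handle the two sums via the product rule on $\phi_i g_i$ together with Lemma~\ref{lemma1} at $l=0,1$ and finite patch overlap. Two small corrections: first, $\bar\phi$ does \emph{not} have $h$-independent $W^{1,\infty}$ norm---$\|\nabla\bar\phi\|_{L^\infty}\le Ch^{-1}$, and this $h^{-1}$ is precisely why you need the $L^2$ interpolation bound $Ch^{p+1}$ in addition to the $H^1$ bound (the paper makes this explicit); second, on $\operatorname{supp}\bar\phi$ the function $\tilde u$ equals either $0$ or $\tilde u_0-\tilde u_1$ (not one of the $\tilde u_j$), so the first term is already controlled by $\|\tilde u_0-\tilde u_1\|_{H^{p+1}(\Omega)}$ without further argument.
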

\begin{proof}
Using equation~\eqref{eq:VenrApproach}, we compute
    \begin{equation}\label{eq:three}
        \begin{split}
            \left| \tilde{u} - \mathcal{I} \tilde{u} - w_h \right|^2_{H^1(\Omega)}
            & \leq \left|\bar{\phi} (\mathcal{I} - \mathcal{I}_h)\tilde{u} +  \sum_{i \in I^0_{el}}\phi_i (\tilde{u} - \tilde{d} \eta_i) - \sum_{i \in I^0_{el}}\phi_i \mathcal{I}_h(\tilde{u} - \tilde{d} \eta_i) \right|^2_{H^1(\Omega)}\\
            & \leq C \left|\bar{\phi} (\mathcal{I} - \mathcal{I}_h)\tilde{u} \right|^2_{H^1(\Omega)} + C \sum_{i \in I^0_{el}}
            \left| \phi_i (\tilde{u} - \tilde{d} \eta_i)\right|^2_{H^1(\Omega)} + C \sum_{i \in I^0_{el}} \left|\phi_i \mathcal{I}_h(\tilde{u} - \tilde{d} \eta_i) \right|^2_{H^1(\Omega)}.
        \end{split}
    \end{equation}
First, the support of $\bar{\phi}$ consists of all non-interface elements ($ I_{el}\backslash I^0_{el} $) and equals 1 on elements
outside the 1-ring neighborhood of interface elements($ I_{el}\backslash I^1_{el} $), so we have the estimation for the first term in equation~\eqref{eq:three},
    \begin{equation}
        \begin{split}
            \left|\bar{\phi} (\mathcal{I} - \mathcal{I}_h)\tilde{u} \right|^2_{H^1(\Omega)}
            & = \sum_{i \in I_{el} \backslash I^0_{el}} \left|\bar{\phi} (\mathcal{I} - \mathcal{I}_h)\tilde{u} \right|^2_{H^1(e_i)} \\
            & \leq C  \sum_{i \in I_{el} \backslash I^0_{el}} \left\Vert \bar{\phi}\right\Vert^2_{L^{\infty}(e_i)} \left|(\mathcal{I} - \mathcal{I}_h)\tilde{u}\right|^2_{H^1(e_i)}
            + C  \sum_{i \in I_{el} \backslash I^0_{el}} \left\Vert \nabla \bar{\phi}\right\Vert^2_{L^{\infty}(e_i)}
            \left\Vert (\mathcal{I} - \mathcal{I}_h)\tilde{u} \right\Vert^2_{L^2(e_i)}\\
            & \leq C  \sum_{i \in I_{el} \backslash I^0_{el}} h^{2p} \left|\tilde{u}_0 - \tilde{u}_1\right|^2_{H^{p+1}(e_i)}
            + C  \sum_{i \in I_{el} \backslash I^0_{el}} h^{-2} h^{2(p + 1)} \left|\tilde{u}_0 - \tilde{u}_1\right|^2_{H^{p+1}(e_i)}\\
            & \leq C h^{2 p} \left|\tilde{u}_0 - \tilde{u}_1\right|^2_{H^{p+1}(\Omega)}.
        \end{split}
    \end{equation}
    Now we can estimate of the second term of equation~\eqref{eq:three} using Lemma \ref{lemma1},
    \begin{equation}
        \begin{split}
             \sum_{i \in I^0_{el}} \left| \phi_i (\tilde{u} - \tilde{d} \eta_i)\right|^2_{H^1(\omega_i)}
            &\leq C \sum_{i \in I^0_{el}}  \left\Vert \phi_i \right\Vert^2_{L^{\infty}(\omega_i)} \left|\tilde{u} - \tilde{d} \eta_i\right|^2_{H^1(\omega_i)} +
             C  \sum_{i \in I^0_{el}} \left\Vert \nabla \phi_i \right\Vert^2_{L^{\infty}(\omega_i)} \left\Vert \tilde{u} - \tilde{d} \eta_i \right\Vert^2_{L^{2}(\omega_i)} \\
            & \leq C \sum_{i \in I^0_{el}}  \left|\tilde{u} - \tilde{d}\eta_i\right|^2_{H^1(\omega_i)} + C \sum_{i \in I^0_{el}}  h^{-2}
            \left\Vert \tilde{u} - \tilde{d} \eta_i \right\Vert^2_{L^{2}(\omega_i)} \\
            & \leq C \sum_{i \in I^0_{el}}  h^{2 p} \left\Vert \tilde{u}_0 - \tilde{u}_1 \right\Vert^2_{H^{p+1}(\omega_i)}\\
            & \leq C h^{2 p} \left\Vert \tilde{u}_0 - \tilde{u}_1 \right\Vert^2_{H^{p+1}(\Omega)}.
        \end{split}
    \end{equation}
    Similarly, we have
        \begin{equation}
             \sum_{i \in I^0_{el}} \left|\phi_i \mathcal{I}_h (\tilde{u} - \tilde{d} \eta_i)\right|^2_{H^1(\omega_i)}
             \leq C h^{2 p} \left\Vert \tilde{u}_0 - \tilde{u}_1 \right\Vert^2_{H^{p+1}(\Omega)}.
    \end{equation}
Combining the above results and taking a square root yields the desired result
    \begin{equation}
        \left| \tilde{u} - \mathcal{I} \tilde{u} - w_h \right|_{H^1(\Omega)} \leq C h^{ p} \left\Vert \tilde{u}_0 - \tilde{u}_1 \right\Vert_{H^{p+1}(\Omega)}.
    \end{equation}
This completes the proof. 
\end{proof}

With the above Lemmas, we now present our main results on the errors. 

\begin{theorem}Suppose $u$ is the exact solution of problem~\eqref{eq:pde} and $u_h$ is HoSGFEM approximation of system~\eqref{eq:GalApproProm} with the enrichment space defined in \eqref{eq:impSGFEMp}, then we have
    \begin{equation}
        | u - u_h |_{H^1(\Omega)} \leq C h^p \Vert u \Vert_{H^{p + 1}(\Omega)}.
    \end{equation}
\end{theorem}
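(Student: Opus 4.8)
The plan is to derive the estimate from a quasi-optimality (C\'ea) bound combined with the approximation results of Lemmas~\ref{lemma1} and~\ref{lemma2}. Since $u_h\in V^h_{GFEM}\subset V$ solves \eqref{eq:GalApproProm} while $u$ solves the variational problem, Galerkin orthogonality gives $a(u-u_h,v_h)=0$ for every $v_h\in V^h_{GFEM}$. Because $\kappa$ is bounded between the positive constants $\kappa_{\min}:=\min(\kappa_0,\kappa_1)$ and $\kappa_{\max}:=\max(\kappa_0,\kappa_1)$, the form $a(\cdot,\cdot)$ is continuous and coercive with respect to the seminorm $|\cdot|_{H^1(\Omega)}$, with $\kappa_{\min}\,|w|_{H^1(\Omega)}^2\le a(w,w)$ and $|a(w,v)|\le\kappa_{\max}\,|w|_{H^1(\Omega)}\,|v|_{H^1(\Omega)}$. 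The usual estimate $\kappa_{\min}|u-u_h|_{H^1(\Omega)}^2\le a(u-u_h,u-u_h)=a(u-u_h,u-v_h)\le\kappa_{\max}|u-u_h|_{H^1(\Omega)}|u-v_h|_{H^1(\Omega)}$ then yields
\begin{equation*}
  |u-u_h|_{H^1(\Omega)}\le\frac{\kappa_{\max}}{\kappa_{\min}}\,\inf_{v_h\in V^h_{GFEM}}|u-v_h|_{H^1(\Omega)},
\end{equation*}
so it only remains to exhibit one admissible $v_h$ with $|u-v_h|_{H^1(\Omega)}\le Ch^p\|u\|_{H^{p+1}(\Omega)}$.

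For the choice of $v_h$, I would use the pointwise decomposition $u=\tilde u_1+\tilde u$ on $\Omega$, which follows directly from the definitions of $\tilde u_1$ and $\tilde u$, and set
\begin{equation*}
  v_h:=\mathcal{I}_h u+w_h=\mathcal{I}_h\tilde u_1+\mathcal{I}_h\tilde u+w_h,\qquad w_h:=\sum_{i\in I^0_{el}}\phi_i(\mathcal{I}-\mathcal{I}_h)(\tilde d\,\eta_i),
\end{equation*}
where $\eta_i$ are the degree-$(p-1)$ polynomials provided by Lemma~\ref{lemma1}. Here $\mathcal{I}_h u\in V^h_{FEM}$ is the $p$-th order nodal interpolant and $w_h\in V^h_{ENR}$ is the function analysed in Lemma~\ref{lemma2}; since $\mathcal{I}_h u$ is a continuous piecewise polynomial and $\tilde d$ vanishes on $\Gamma$, both summands are continuous across $\Gamma$, so $v_h\in V^h_{GFEM}\subset V$ is admissible. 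One should also note that the Gram--Schmidt operator $\mathcal{L}$ in \eqref{eq:impSGFEMp} is merely a change of basis and leaves $V^h_{ENR}$ unchanged, so it does not affect the membership of $w_h$.

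For the approximation error of $v_h$, I would split $u-v_h=(\tilde u_1-\mathcal{I}_h\tilde u_1)+(\tilde u-\mathcal{I}_h\tilde u-w_h)$. The first term is controlled by the classical Lagrange interpolation estimate for the smooth extension $\tilde u_1\in H^{p+1}(\Omega)$ together with the stability bound \eqref{eq:propertyU}, giving $|\tilde u_1-\mathcal{I}_h\tilde u_1|_{H^1(\Omega)}\le Ch^p\|\tilde u_1\|_{H^{p+1}(\Omega)}\le Ch^p\|u_1\|_{H^{p+1}(\Omega_1)}$. The second term is precisely the quantity bounded in Lemma~\ref{lemma2}, which with \eqref{eq:propertyU} gives $|\tilde u-\mathcal{I}_h\tilde u-w_h|_{H^1(\Omega)}\le Ch^p\|\tilde u_0-\tilde u_1\|_{H^{p+1}(\Omega)}\le Ch^p(\|u_0\|_{H^{p+1}(\Omega_0)}+\|u_1\|_{H^{p+1}(\Omega_1)})$. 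Summing these two bounds and substituting into the quasi-optimality estimate of the first step yields $|u-u_h|_{H^1(\Omega)}\le Ch^p\|u\|_{H^{p+1}(\Omega)}$, where $\|u\|_{H^{p+1}(\Omega)}$ is understood as the broken norm $\|u_0\|_{H^{p+1}(\Omega_0)}+\|u_1\|_{H^{p+1}(\Omega_1)}$.

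The genuinely analytic part of the argument --- building the interface-adapted polynomials $\eta_i$ from matched Taylor expansions of $\tilde u$ and $\tilde d$ and summing the local patchwise bounds --- is already carried out in Lemmas~\ref{lemma1}--\ref{lemma2}, so the theorem itself is essentially assembly. The two points that still need care, and which I would expect to be the only real work, are: (i) verifying that $v_h$ is admissible, i.e.\ that $w_h$ really belongs to the span \eqref{eq:impSGFEMp} (the roles of the one-sided distance $\tilde d$ versus $d$, of the centroid shifts $(x-x_k)^i(y-y_k)^j$, and of $\mathcal{L}$); and (ii) fixing the convention that $\|u\|_{H^{p+1}(\Omega)}$ in the statement denotes the piecewise norm on $\Omega_0$ and $\Omega_1$, since $u$ is only $C^0$ across $\Gamma$ and hence not globally in $H^{p+1}(\Omega)$.
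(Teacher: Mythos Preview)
Your proposal is correct and follows essentially the same route as the paper: define $v_h=\mathcal{I}_h\tilde u_1+\mathcal{I}_h\tilde u+w_h$, split $u-v_h$ into the smooth interpolation error for $\tilde u_1$ and the piece $\tilde u-\mathcal{I}_h\tilde u-w_h$ handled by Lemma~\ref{lemma2}, and finish with C\'ea's lemma. Your extra remarks on the admissibility of $w_h$ (the $\tilde d$ versus $d$ issue and the role of $\mathcal{L}$) and on the broken-norm reading of $\|u\|_{H^{p+1}(\Omega)}$ are well taken and in fact more careful than the paper, which simply asserts $w_h\in V^h_{ENR}$.
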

\begin{proof}Define $v_h :=  \mathcal{I}_h \tilde{u} + \mathcal{I}_h \tilde{u}_1 + w_h$, where $w_h \in V^h_{ENR}$ satisfies the estimation
given in Lemma \ref{lemma2}. It is clear that $v_h \in V^h$ since $\mathcal{I}_h \tilde{u} + \mathcal{I}_h \tilde{u}_1 \in V^h_{FEM}$ and
$w_h \in V^h_{ENR}$. Thus, we have
    \begin{equation}
        \begin{split}
            \left| u - v_h \right|_{H^1(\Omega)}
            & = \left|u -(\tilde{u} + \tilde{u}_1) + (\tilde{u} + \tilde{u}_1) - v_h\right|_{H^1(\Omega)}\\
            & \leq |u -(\tilde{u} + \tilde{u}_1)|_{H^1(\Omega)} + |(\tilde{u} + \tilde{u}_1) - v_h|_{H^1(\Omega)}\\
            & \leq |u -(\tilde{u} + \tilde{u}_1)|_{H^1(\Omega)} + |(\tilde{u} + \tilde{u}_1) - \mathcal{I}_h \tilde{u} - \mathcal{I}_h \tilde{u}_1 - w_h |_{H^1(\Omega)}\\
            & \leq |u -(\tilde{u} + \tilde{u}_1)|_{H^1(\Omega)} + |\tilde{u} - \mathcal{I}_h \tilde{u} - w_h|_{H^1(\Omega)}
            + |\tilde{u}_1 - \mathcal{I}_h \tilde{u}_1|_{H^1(\Omega)}\\
            & \leq C h^{p} \Vert \tilde{u}_0 - \tilde{u}_1 \Vert_{H^{p+1}(\Omega)} + C h^{p}\Vert \tilde{u}_1 \Vert_{H^{p+1}(\Omega)}
            \leq C h^{p}\Vert u \Vert_{H^{p+1}(\Omega)},
        \end{split}
    \end{equation}
    where the penultimate inequality follows from Lemma~\ref{lemma2} and the interpolation estimate, and the last inequality follows from
    the properties of $\tilde{u}_i$~\eqref{eq:propertyU}. Finally, according to the C\'ea's Lemma, we have the following estimation,
    \begin{equation}
       |u - u_h|_{H^1(\Omega)} \leq C \min\limits_{v \in V^h} |u - v|_{H^1(\Omega)} \leq  C |u - v_h|_{H^1(\Omega)} \leq C h^p \Vert u \Vert_{H^{p+1}(\Omega)},
    \end{equation}
    which gives the desired result and completes the proof.
\end{proof}

\section{Numerical experiments}\label{sec:num}
In this section, we present various numerical examples to demonstrate the effectiveness of the proposed method. 
We first consider the benchmark model problem~\eqref{eq:pde} with two interfaces studied in \cite{zhang2020stable, zhang2022condensed}. 
We show high-order optimal convergent results for high-order elements ($p=2,3,4,5$) and compare with the widely-used methods discussed in Section \ref{sec:m}. 
Subsequently, in Section~\ref{subsec:TestRobustness}, we focus on the study of robustness. To this end, we consider a challenging problem, an elastostatic problem in bimaterial solids.

For clarity in subsequent demonstration of the numerical study, we adopt the following abbreviations to denote several GFEMs discussed in Sections~\ref{sec:sgfem}.
\begin{itemize}
    \item FEM: Standard FEM of degree $p$;
    \item GFEM: Standard GFEM which employs $\phi^1$ as PU functions, $dx^iy^j, 0 \leq i+j < p$ as enrichment functions, and the enriched node set is
    all the vertices of interface elements;
    \item SGFEM: Stable GFEM which employs Hermite polynomials as PU functions, $\tilde{d}x^iy^j - \mathcal{I}_h(\tilde{d}x^iy^i)$, $0 \leq i+j < p$ as enrichment functions, and the enriched node set includes all the vertices of interface elements. The LPCA technology is used to reduce the stiffness matrix's SCN; see \cite{zhang2020stable};
    \item CGFEM: Corrected GFEM which employs $\phi^1$ as PU functions and $dR(x, y)x^iy^j$ as enrichment functions. Enriched node set is $I^1_{v}$, which is defined in~\eqref{eq:CGFEM2};
    \item HoSGFEM: High-order stable GFEM proposed in this paper which employs~\eqref{eq:PUfunction} as PU functions and $\tilde{d}(x-x_k)^i(y-y_k)^j - \mathcal{I}_h(\tilde{d}(x-x_k)^i(y-y_k)^j)$, $0 \leq i +j < p$ as enrichment functions. Enriched nodes correspond to the interface elements.
\end{itemize}

The main tests in the following are two classic interface geometries: a straight line and a circle. We present the numerical results
for model problem~\eqref{eq:pde} on these two interfaces. The computational domain is fixed as the unit square $[0,1] \times[0,1]$ and uniformly
discretized into $N\times N$ quadrilateral elements, where $N=5, 10, 20,\cdots$. Element size $h$ is $\frac{1}{N}$. Figure~\ref{fig:TwoInterfaces}
illustrates the case for $N=10$, with the interface elements highlighted in light blue.
\begin{figure}[htbp]
    \centering
    \hspace{3cm}
    \subfloat[Straight line interface]{\centering\includegraphics[width=0.25\linewidth]{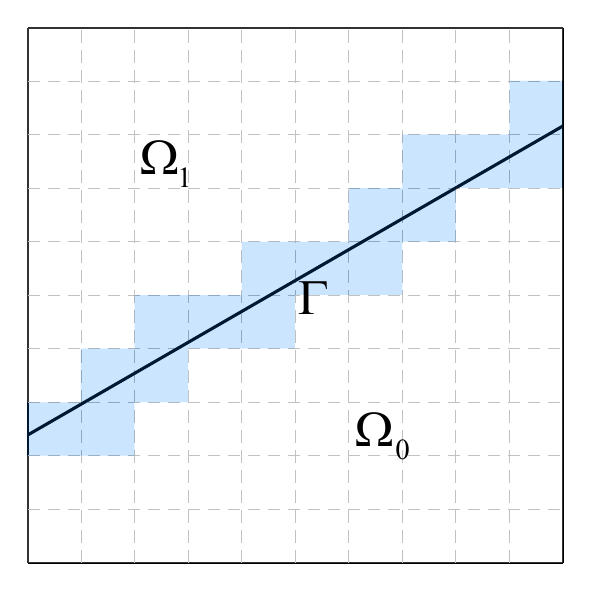}}
    \hfill
    \subfloat[Circular interface]{\centering\includegraphics[width=0.25\linewidth]{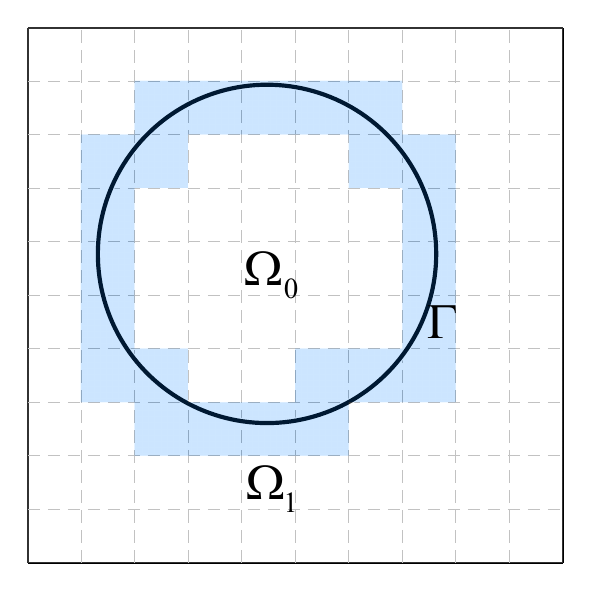}}
    \hspace{3cm}
    \caption{Two different interface geometries, with elements intersecting the interface highlighted in light blue\label{fig:TwoInterfaces}}
\end{figure}

Straight line interface $\Gamma$ has the equation $y = \tan{(\theta_0)}(x - 1 - \beta) + 1$, where $\theta_0 = \frac{\pi}{6}$ and $\beta = \frac{1}{\pi}$. The region below the line is denoted as $\Omega_0$ and the other is $\Omega_1$. 
We consider the problem \eqref{eq:pde} with a manufactured solution $u$ as
\begin{equation}\label{eq:ManuSol-Line}
    u= \begin{cases}r^{\frac{4}{3}} \cos \left(\frac{4}{3}\left(\theta+\pi-\theta_0\right)\right)+\frac{\kappa_0}{\kappa_1} r^{\frac{4}{3}} \sin \left(\frac{4}{3}\left(\theta+\pi-\theta_0\right)\right)+\sin (x y), & y>\tan \left(\theta_0\right)(x-1-\beta)+1, \\ r^{\frac{4}{3}} \cos \left(\frac{4}{3}\left(\theta+\pi-\theta_0\right)\right)+r^{\frac{4}{3}} \sin \left(\frac{4}{3}\left(\theta+\pi-\theta_0\right)\right)+\sin (x y), & y \leq \tan \left(\theta_0\right)(x-1-\beta)+1,\end{cases}
\end{equation}
where $(r, \theta)$ is the polar coordinate centered at $(1 + \beta, 1)$. $\kappa_0$ and $\kappa_1$ are the predefined coefficients to be specified later.

Circular interface has the center at $(x_0, y_0) = (\frac{1}{\sqrt{5}}, \frac{1}{\sqrt{3}})$ with radius $r_0 = \frac{1}{\sqrt{10}}$. The region inside the circle is denoted as $\Omega_0$ and the outside is $\Omega_1$. The manufactured solution is
\begin{equation}\label{eq:circle_u}
u= \begin{cases}\frac{2 \kappa_1}{ r_0^4} r^2 \cos (2 \theta), & r<r_0, \\
\frac{\kappa_1+\kappa_0}{ r_0^4} r^2 \cos (2 \theta)+ \frac{(\kappa_1-\kappa_0)}{r^2} \cos (2 \theta), & r \geq r_0,\end{cases}
\end{equation}
where $(r, \theta)$ is the polar coordinate centered at $(x_0, y_0)$. The remaining terms $f, q$, and $g$ in the model problem~\eqref{eq:pde} can be directly derived from the manufactured solution $u$.

Throughout all numerical experiments, we have the following observations
\begin{itemize}
  \item The numerical experiments verify the optimal error convergence analysis established in Section~\ref{sec:ea} for the proposed HoSGFEM of degrees 2 to 5.
  \item SGFEM can achieve optimal convergence rates only for degree $2$ by carefully selecting the LPCA parameters. Its convergence rate degrades for higher-order approximations.
  \item The accuracy of higher-order GFEM and CGFEM is fundamentally limited by linear dependence, which induces ill-conditioning that prevents convergence and can also cause the error to increase upon mesh refinement. 
  \item The FEM consistently achieves only a convergence rate of $\mO(h^{0.5})$.
\end{itemize}

\subsection{Error convergence rates and scaled condition numbers}
\label{subset:TestModedProb}
The first experiment focuses on testing the convergence property. We implement all the methods mentioned above in both interfaces with various values for $\kappa_0$ and $\kappa_1$. Figure \ref{fig:hosgfem_H1error} show the energy-norm ($\| w\|_E := \sqrt{a(w,w)}$) error convergence rates of the proposed HoSGFEM with $p=2,3,4,5$ for the interface problem \eqref{eq:pde} on three different interface configurations. 
The configurations are: (1) the straight line in Figure \ref{fig:TwoInterfaces} with $\kappa_0=1, k_1 = 10$; (2) the circular interface in Figure \ref{fig:TwoInterfaces} with $\kappa_0=1, k_1 = 20$; and (3) the circular interface in Figure \ref{fig:TwoInterfaces} with $\kappa_0=1, k_1 = 200$.
Figure \ref{fig:hosgfem_scn} shows the scaled condition numbers for these tests.
One observes that the HoSGFEM approximations converge optimally to the exact solution in the energy norm, confirming our theoretical findings established in Section \ref{sec:ea}.
The scaled condition number of the stiffness matrix exhibits order $\mO(h^{-2})$, validating the desired numerical stability on SCN. We observe a slight degeneracy in the convergence rates for higher-order elements with fine meshes. This is expected due to the increased condition number and round-off errors.
\begin{figure}[htbp]
    \centering
    \includegraphics[width=1\linewidth]{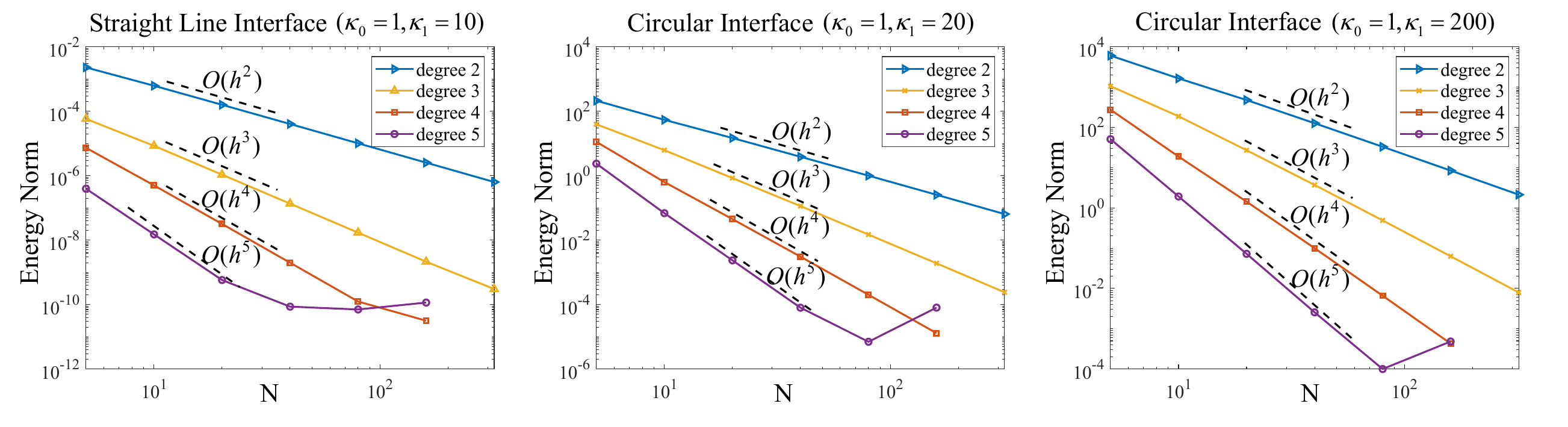}
    \caption{The energy-norm errors with respect to mesh size $N$ for HoSGFEM with $p=2,3,4,5$. The left plot shows the results for the straight line interface with $\kappa_0=1, k_1 = 10$; the middle plot shows the results for the circular interface with $\kappa_0 = 1, \kappa_1 = 20$; the right plot shows the results for the circular interface with $\kappa_0 = 1, \kappa_1 = 200$.}
    \label{fig:hosgfem_H1error}
\end{figure}

\begin{figure}[htbp]
    \centering
    \includegraphics[width=1\linewidth]{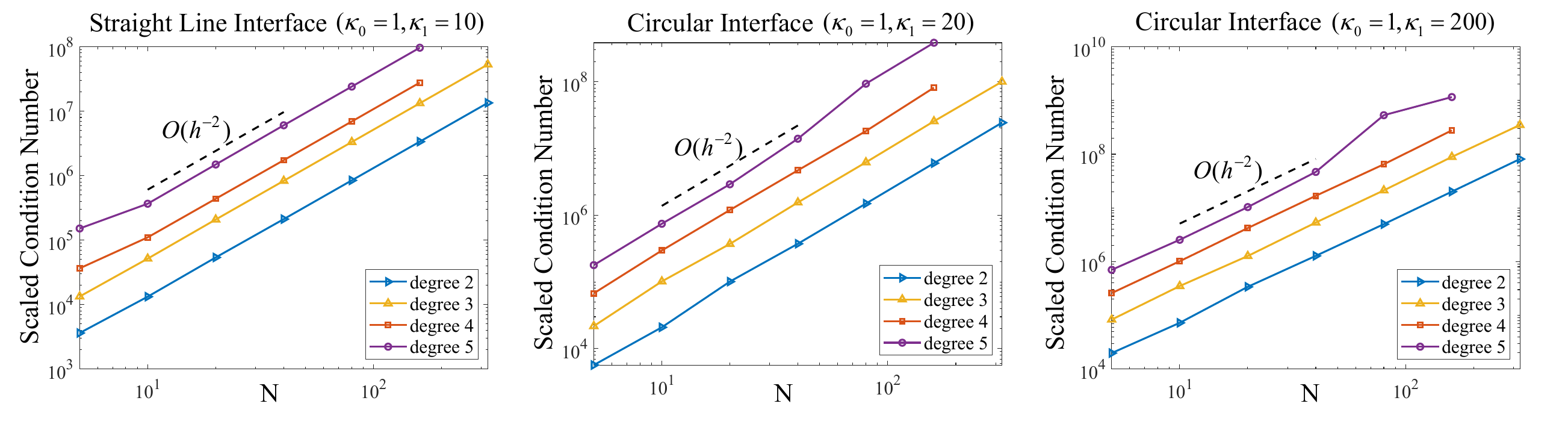}
    \caption{The scaled condition number with respect to $N$ for HoSGFEM with $p=2,3,4,5$. The left plot shows the results for the straight line interface with $\kappa_0=1, k_1 = 10$; the middle plot shows the results for the circular interface with $\kappa_0 = 1, \kappa_1 = 20$; the right plot shows the results for the circular interface with $\kappa_0 = 1, \kappa_1 = 200$.}
    \label{fig:hosgfem_scn}
\end{figure}

 Figures \ref{fig:line_e1error}, \ref{fig:line_scn}, \ref{fig:circle_e1error}, and \ref{fig:circle_scn} present a detailed comparison among different methods and polynomial degrees. 
 FEM consistently achieves only $\mO(h^{0.5})$ convergence rate, while its corresponding scaled condition number grows at a rate of $\mO(h^{-2})$. GFEM and CGFEM improve approximation accuracy by incorporating combinations of higher-order polynomials and distance functions. However, near-optimal convergence can only be observed in the first few levels of mesh refinement. 
 The scaled condition numbers of the stiffness matrices obtained by both methods are exceptionally large. This can be regarded as a key reason for the rapid stagnation of error reduction. 
 SGFEM employs the LPCA technology to process the stiffness matrix. In all experiments, we choose $\epsilon = 10^{-15}$ as the threshold. Owing to the additional stabilization techniques employed, the scaled condition number of SGFEM is less than that of GFEM and CGFEM. However, the removal of certain basis function components leads to a convergence rate lower than the theoretical value. Moreover, our experiments indicate that the choice of the threshold parameter $\epsilon$ significantly influences the final convergence results. Therefore, this parameter must be carefully selected to simultaneously ensure overall stability and approximation accuracy. However, such a parameter-dependent strategy for guaranteeing performance may pose challenges in practical applications. 
 Among these methods, HoSGFEM demonstrates superior performance, maintaining the same growth rate of the scaled condition number as FEM while achieving optimal convergence results that are in agreement with the theoretical findings.
 \begin{figure}[htbp]
    \centering
    \includegraphics[width=1\linewidth]{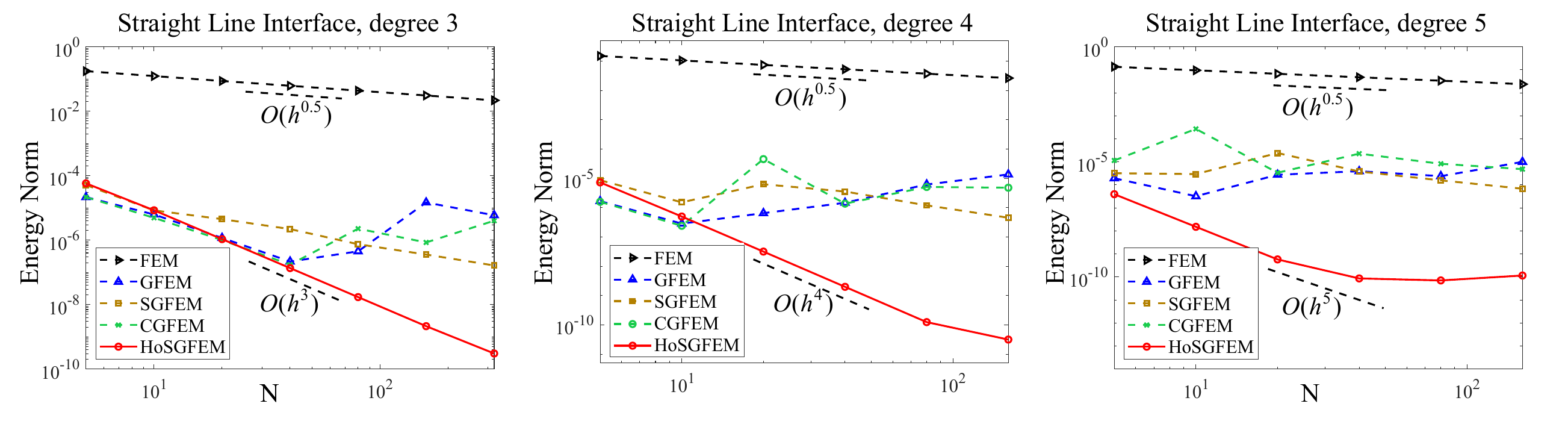}
    \caption{Comparison of energy-norm error for different GFEMs on a straight line interface, with degrees 3, 4, and 5 (from left to right).}
    \label{fig:line_e1error}
\end{figure}

\begin{figure}[htbp]
    \centering
    \includegraphics[width=1\linewidth]{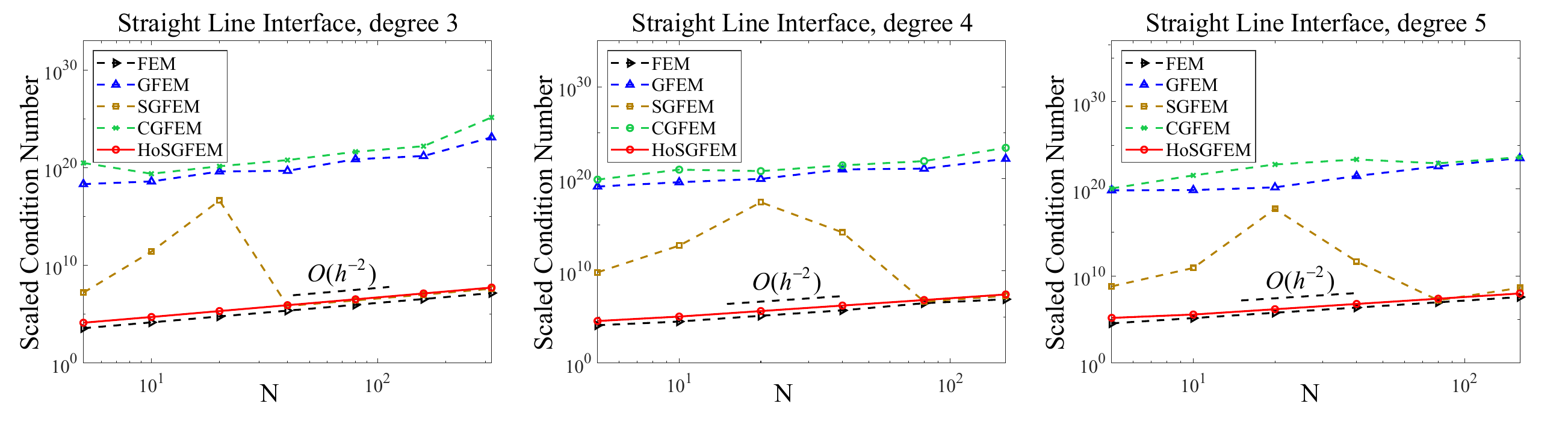}
    \caption{Comparison of scaled condition number for different GFEMs on a straight line interface, with degrees 3, 4, and 5 (from left to right).}
    \label{fig:line_scn}
\end{figure}

\begin{figure}[htbp]
    \centering
    \includegraphics[width=1\linewidth]{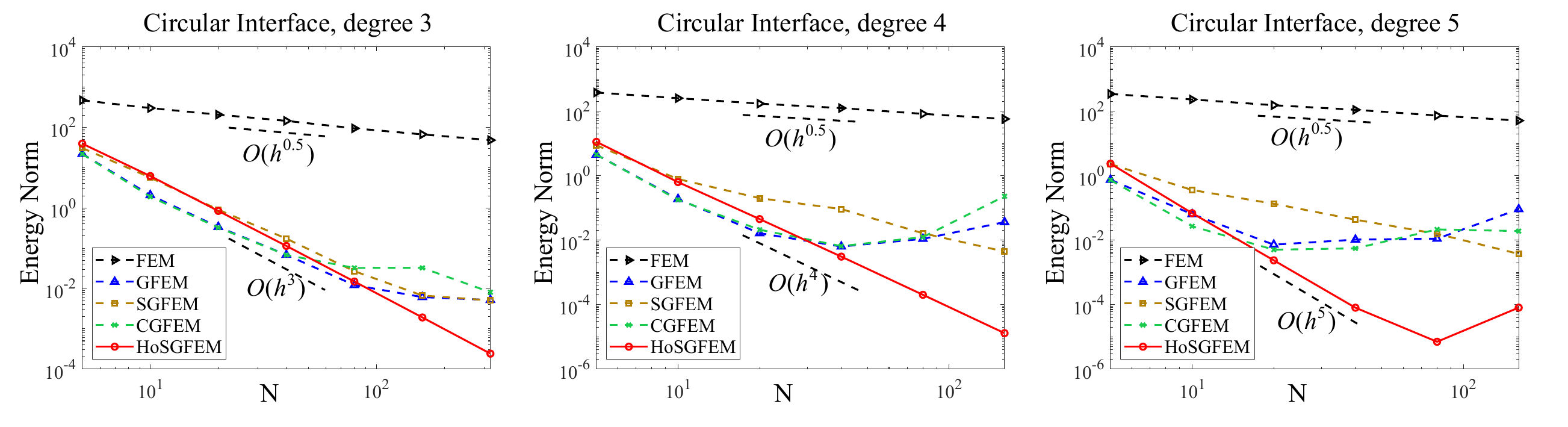}
    \caption{Comparison of energy-norm error for different GFEMs on a circular interface, with degrees 3, 4, and 5 (from left to right).}
    \label{fig:circle_e1error}
\end{figure}

\begin{figure}[htbp]
    \centering
    \includegraphics[width=1\linewidth]{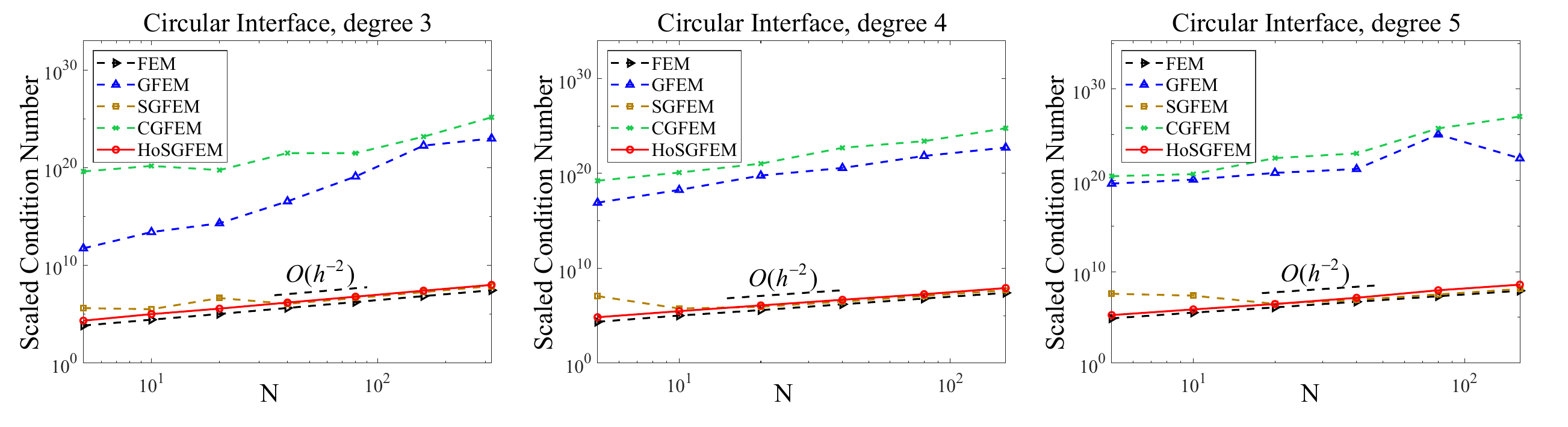}
    \caption{Comparison of scaled condition number for different GFEMs on a circular interface, with degrees 3, 4, and 5 (from left to right).}
    \label{fig:circle_scn}
\end{figure}

\subsection{Robustness test}\label{subsec:TestRobustness}
Next, we investigate how the SCN of the stiffness matrix changes when the interface approaches the element boundary. 
For this purpose, we use a fixed 40$\times$40 uniform quadrilateral mesh in the experiment, with the interface positioned as a straight line parallel to the $x$ axis at $y = 0.5 + \delta$. 
As $\delta$ decreases, the interface progressively approaches the element boundary. Taking $\delta = 0.03 \times 2^{-i}(i = 1, 2, 3,\ldots, 15)$, Figure \ref{fig:RobustDeltaTest} shows the stiffness matrix's SCN of different GFEMs for cases with $p=3,4,5$. The figure demonstrates that as the interface approaches the element boundary, the stiffness matrix's SCN of GFEM and CGFEM consistently maintain significantly large values, while FEM, SGFEM, and HoSGFEM remain at relatively low values. It is worth mentioning that when $\delta$ becomes sufficiently small, the scaled condition number of HoSGFEM exhibits some growth. This phenomenon may be due to the potential generation of extremely small enrichment functions in $d x^iy^j -\mathcal{I}_h(dx^iy^j)$, which could introduce considerable computational errors in the basis functions.

\begin{figure}[htbp]
    \centering
    \includegraphics[width=1\linewidth]{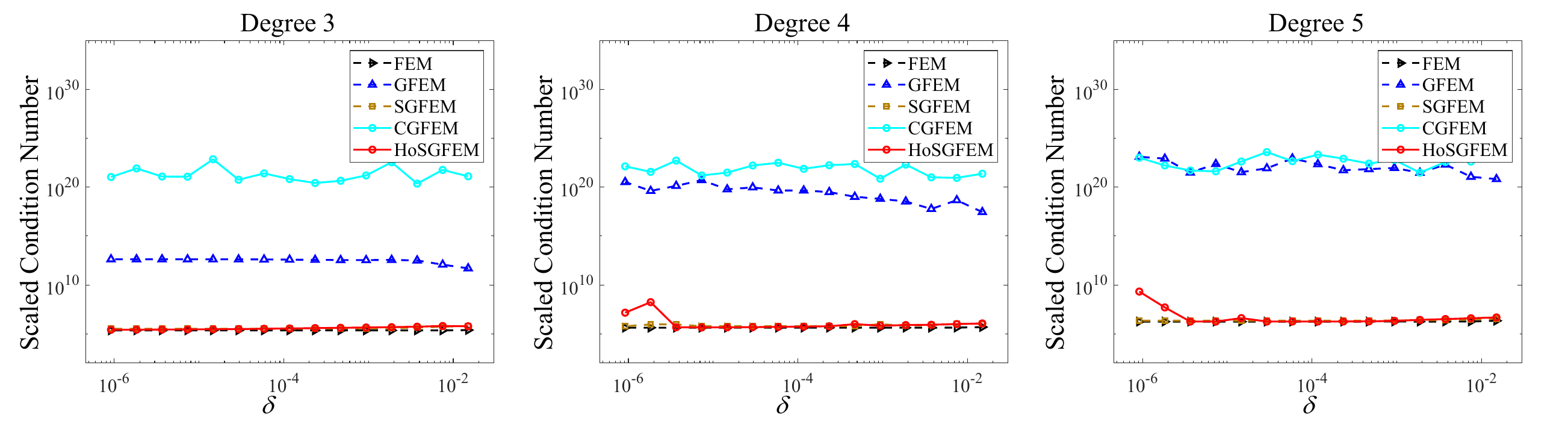}
    \caption{The SCN of stiffness matrices for different GFEMs with varying $\delta$. With $N = 40$ fixed, the left plot shows the case of cubic elements, the middle plot shows the quartic case, and the right plot shows the quintic case.}\label{fig:RobustDeltaTest}
\end{figure}

\subsection{Application: bi-material with a circular inclusion}\label{subsec:TestElastostaitcProb}
We now consider an elastostatic problem studied in \cite{sukumar2001modeling}. Figure \ref{fig:BiMaterialProb} illustrates a disk of radius $b$ containing a circular inclusion of radius $a$. The outer annular region $\Omega_2$ has material properties with Lam{\'e} constants $\lambda_2=5.7692$ and $\mu_2 = 3.8461$ and the inner circle has parameters $\lambda_1 =0.4$ and $\mu_1=0.4$. Impose the linear displacement field $\boldsymbol{u} = (x_1, x_2)$ on $\partial \Omega(r =b)$, which means points $\boldsymbol{x} = (x_1, x_2) \in \partial \Omega$ are constrained to move to the deformed positions ${\bar{\boldsymbol{x}} = (2 x_1, 2 x_2)}$.
\begin{figure}[htpb]
    \centering
    \includegraphics[width=0.35\linewidth]{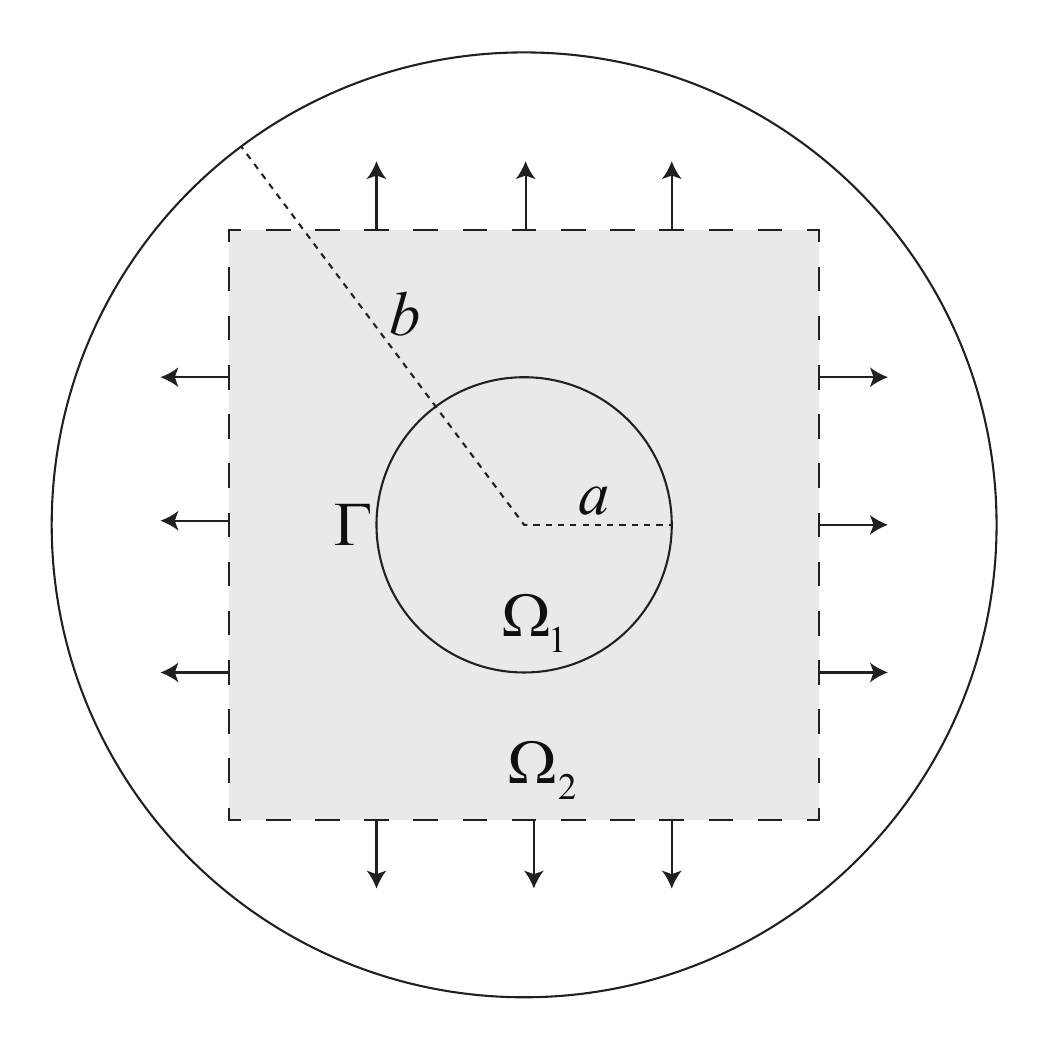}
    \caption{Bi-material with a circular inclusion, the gray area is the computation domain.}
    \label{fig:BiMaterialProb}
\end{figure}

The corresponding field equations of elastostatics are
\begin{equation*}
\begin{aligned}
    & \nabla \cdot \boldsymbol{\sigma}+\boldsymbol{f}=0,\ \text { in } \Omega, \\
    & \boldsymbol{\sigma}=\mathbf{C}: \boldsymbol{\varepsilon}, \\
    & \boldsymbol{\varepsilon}=\frac{1}{2} (\nabla \boldsymbol{u} + (\nabla \boldsymbol{u})^T),\\
\end{aligned}
\end{equation*}
where $\boldsymbol{u}$, $\boldsymbol{f}$, and $\boldsymbol{C}$ denote the displacement field, body force, and elasticity tensor, respectively. $\boldsymbol{\epsilon}$ and $\boldsymbol{\sigma}$ represent the strain tensor and stress tensor. Additionally, the displacement boundary conditions and traction boundary conditions are specified as follows
\begin{equation*}
\begin{aligned}
& \boldsymbol{u}=\boldsymbol{u}_0 \quad \text { on } \Gamma_u, \\
& \boldsymbol{\sigma} \cdot \boldsymbol{n}=\boldsymbol{t}_0 \quad \text { on } \Gamma_t, \\
& \llbracket \boldsymbol{\sigma} \cdot \boldsymbol{n}_I^i \rrbracket=0 \quad \text { on } \Gamma_I,
\end{aligned}
\end{equation*}
where $\boldsymbol{u}_0$ and $\boldsymbol{t}_0$ denote the prescribed displacement and traction, respectively, and $\Gamma_I$ represents the material interface. Based on the governing equations above, the exact solution for the bimaterial problem is given by
\begin{equation*}
    \begin{aligned}
& u_r(r)= \begin{cases}{\left[\left(1-\frac{b^2}{a^2}\right) \alpha+\frac{b^2}{a^2}\right] r,} & 0 \leqslant r \leqslant a, \\
\left(r-\frac{b^2}{r}\right) \alpha+\frac{b^2}{r}, & a \leqslant r \leqslant b,\end{cases} \\
& u_\theta=0
\end{aligned}
\end{equation*}
where
\begin{equation*}
\alpha=\frac{\left(\lambda_1+\mu_1+\mu_2\right) b^2}{\left(\lambda_2+\mu_2\right) a^2+\left(\lambda_1+\mu_1\right)\left(b^2-a^2\right)+\mu_2 b^2}.
\end{equation*}
Further details regarding this problem can be found in~\cite{sukumar2001modeling}. The computational domain is defined as a square region with side length 2 (gray area in Figure \ref{fig:BiMaterialProb}), where exact traction conditions are applied on its boundaries. Exact displacements in the $x$ and $y$ directions is shown in Figure \ref{fig:BiMaterialExactU}. We discretize the domain using a uniform $N \times N$ mesh and choose $a = 0.42$ and $b = 2$ for the numerical experiments. Energy norm is computed by
\begin{equation*}
    \Vert \boldsymbol{u} - \boldsymbol{u}_h \Vert^2_{\mathcal{E}(\Omega)} := \int_{\Omega} \left(\boldsymbol{\varepsilon} - \boldsymbol{\varepsilon}_h\right)^{T} : C :\left(\boldsymbol{\varepsilon} - \boldsymbol{\varepsilon}_h\right)d\Omega.
\end{equation*}

Figure \ref{fig:BiMaterialEnergyError} and \ref{fig:BiMaterialSCN} presents the error results of different methods for degrees from 2 to 4. Similar to results in Section \ref{subset:TestModedProb}, FEM remains only $\mO(h^{0.5})$ convergence rate. Both GFEM and CGFEM exhibit large scaled condition numbers in these cases. In particular, for the cubic and quartic cases, the energy error in CGFEM initially decreases but subsequently exhibits a rapid increase. SGFEM maintains optimal convergence only in the case of degree 2. At higher degrees, the corresponding convergence rate fails to achieve optimality. 
Overall, HoSGFEM demonstrates superior performance, with numerical results closely matching the theoretically optimal convergence rate while maintaining the desired $\mO(h^{-2})$ growth rate of the scaled condition number.

\begin{figure}[htpb]
    \centering
    \includegraphics[width=0.75\linewidth]{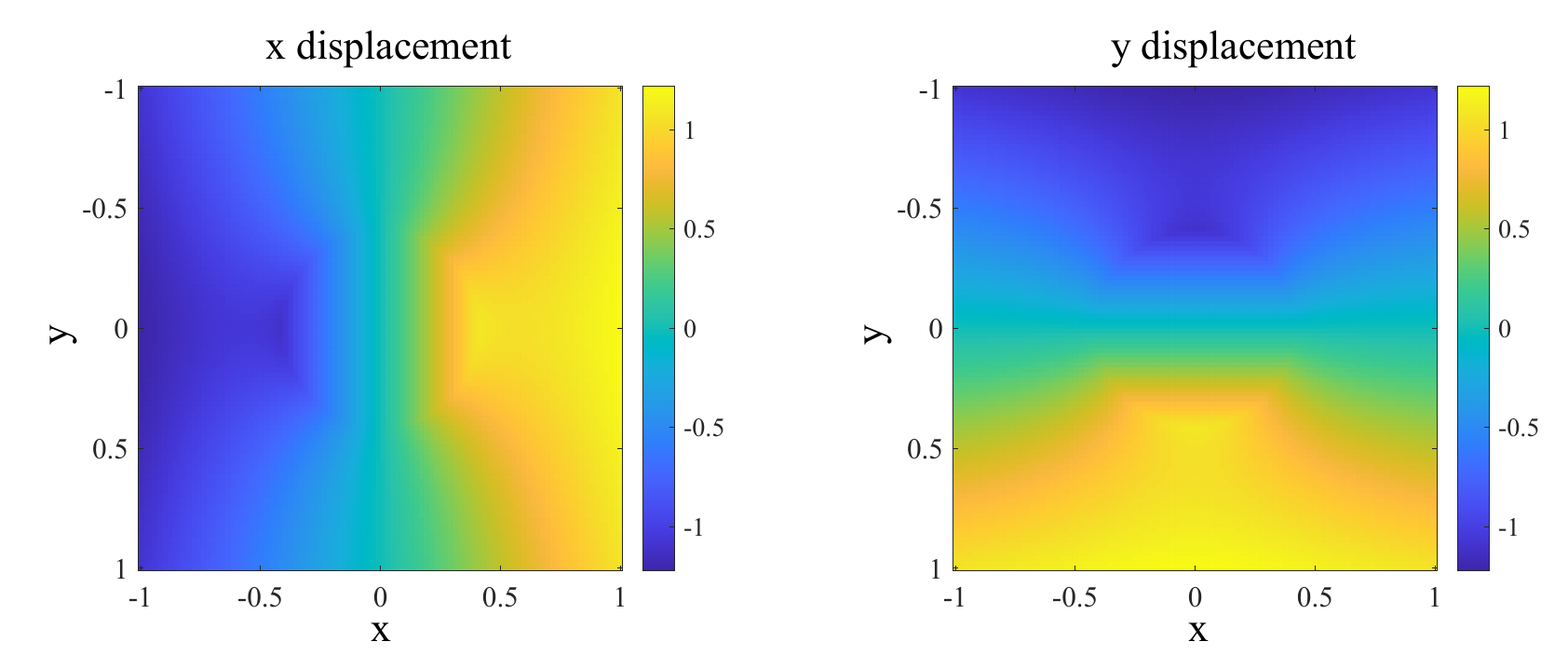}
    \caption{Exact displacements in both $x$- and $y$-directions.}
    \label{fig:BiMaterialExactU}
\end{figure}
\begin{figure}[htpb]
    \centering
    \includegraphics[width=1\linewidth]{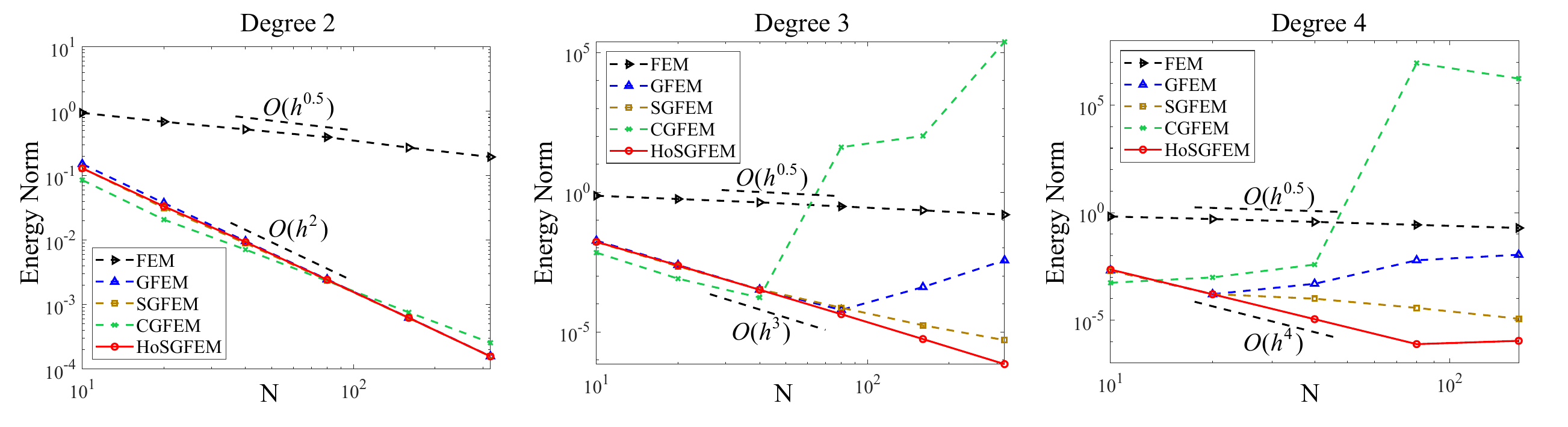}
    \caption{The energy-norm errors with respect to $N$ of different GFEMs for the bi-material problem.}
    \label{fig:BiMaterialEnergyError}
\end{figure}

\begin{figure}[htpb]
    \centering
    \includegraphics[width=1\linewidth]{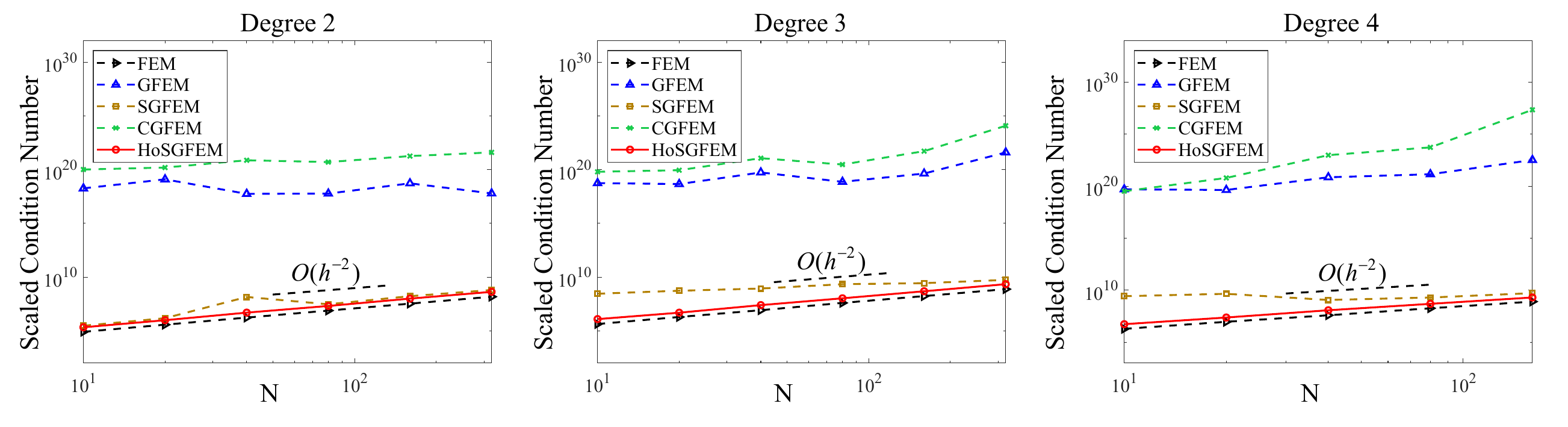}
    \caption{The scaled condition number with respect to $N$ of different GFEMs for the bi-material problem.}
    \label{fig:BiMaterialSCN}
\end{figure}

\section{Conclusions}\label{section7}
In this paper, we developed a high-order stable GFEM for interface problems. 
The proposed method satisfies the desired properties of (a) optimal convergence, (b) numerical stability, and (c) robustness. 
Similar to existing stable GFEM~\cite{babuvska2012stable, zhang2020stable} for lower-order elements, we utilized the distance function and interpolation operator in the construction of the enrichment function. In contrast to stable GFEM of degree two, where one uses Local Principal Component Analysis for removing correlated components from the stiffness matrix, our approach ensures stable growth of the scaled condition number, which is inherited from the construction of the enrichment space. Specifically, we designed partition of unity functions based on $p$-th order Bernstein polynomials, where the enrichment space uses interface elements rather than vertices as enriched nodes, thereby reducing the degrees of freedom. Furthermore, we employed $(x-x_k)^i(y-y_k)^j$ instead of $x^iy^j$ when constructing enrichment functions and performing local orthogonalization at each enriched node. The entire process is explicit and does not depend on any auxiliary parameters.
The method's performance is validated through numerical experiments for $p\le 5$. Results confirm optimal convergence rates, consistent with theory, and a stiffness matrix whose scaled condition number remains well-behaved at $\mathcal{O}(h^{-2})$ as for standard FEM.
For elements of order $p\ge 6$, the proposed method may encounter larger errors from rounding errors and the local orthogonalization Algorithm \ref{alg:gs} that leads to lower robustness. 
Special treatment might be necessary as the interface approaches the element boundary. 

While the present study demonstrates the efficacy of the proposed high-order stable GFEM, it opens up numerous avenues for further research. The immediate next step is a rigorous extension to three-dimensional simulations, which will serve as the ultimate test of its geometric flexibility and stabilization scheme. Beyond this, we envision applying the methodology to more sophisticated interface problems, particularly in coupled systems like thermo-elasticity or electro-diffusion, where high-order accuracy on unfitted meshes could offer significant advantages. From a computational standpoint, a critical future direction is the creation of specialized iterative solvers and multi-level preconditioners to address the computational cost associated with high-order discretizations and cut cells, thereby enabling large-scale scientific and industrial applications.

\section*{Acknowledgements}
\vspace{-0.4cm} The authors were supported by the Strategic Priority Research Program of the Chinese Academy of
Sciences Grant No. XDB0640000, the Key Grant Project of the NSF of China (No.12494552), the NSF of China (No.12471360) and start-up funding from the Yau Mathematical Sciences Center, Tsinghua University.

\bibliographystyle{elsarticle-num-names}
\bibliography{refs}

\end{document}